\theoremstyle{plain}
\newtheorem{theorem}{Theorem}[section]
\newtheorem{lemma}[theorem]{Lemma}
\newtheorem*{de-lemma}{Lemma}
\theoremstyle{remark}
\theoremstyle{definition}
\newcommand{\dd}{\mathrm{d}}
\newcommand{\R}{\mathbb{R}}\newcommand{\N}{\mathbb{N}}
\DeclareMathOperator{\supp}{supp}
\DeclareMathOperator{\nn}{{\bf n}}
\DeclareMathOperator{\ee}{{\bf e}}
\DeclareMathOperator{\VV}{{\bf V}}
\begin{document}

\title[Connecting orbits in Hilbert spaces]{Connecting orbits in Hilbert spaces and applications to P.D.E.}

\author{Panayotis Smyrnelis} \address[P.~ Smyrnelis]{Institute of Mathematics,
Polish Academy of Sciences, 00-656 Warsaw, Poland}
\email[P. ~Smyrnelis]{psmyrnelis@impan.pl}

\date{}

\maketitle
\begin{abstract}
We prove a general theorem on the existence of heteroclinic orbits in Hilbert spaces, and present a method to reduce the solutions of some P.D.E. problems to such orbits. In our first application, we give a new proof in a slightly more general setting of the heteroclinic double layers (initially constructed by Schatzman \cite{scha}), since this result is particularly relevant for phase transition systems. In our second application, we obtain a solution of a fouth order P.D.E. satisfying similar boundary conditions.

\end{abstract}

\section{Introduction and Statements}

Functional Analysis methods are often useful to solve efficiently P.D.E. problems.
We refer to \cite[Ch. 10]{brezis} and \cite[Ch. 7 and 9]{evans} for some classical applications to evolution equations.
The idea is to view a solution $\R^2\ni (t,x)\mapsto u(t,x)$ of a P.D.E. as a map $t\mapsto [U(t):x\mapsto [U(t)](x):=u(t,x)]$ taking its values in a space of functions, and reduce the initial P.D.E. to an O.D.E. problem for $U$.
For instance, in the case of the heat equation and the wave equation, this reduction is based on the theorem of Hille-Yosida \cite[Ch. 10]{brezis} .

In this paper, we apply this viewpoint to the elliptic system
\begin{equation}\label{system0}
\Delta u(t,x)=\nabla W(u(t,x)), \ u:\R^2\to\R^m\  (m\geq 2), \, (t,x)\in \R^2,
\end{equation}
where $W:\R^m\to\R$ is a function such that
\begin{subequations}\label{w13}
\begin{equation}\label{w1}
\text{$W\in C^{2,\alpha}(\R^m; \R)$ (with $\alpha\in (0,1)$) is nonnegative, and has exactly $2$ zeros $a^-$ and $a^+$,}
\end{equation}
\begin{equation}\label{w2}
\text{$D^2W(u)(\nu,\nu)\geq c$, $\forall u\in\R^m$: $|u-a^\pm|\leq r$, $\forall \nu \in \R^m$: $|\nu|=1$, with $r,c>0$},
\end{equation}
\begin{equation}\label{w3}
\liminf_{|u|\to\infty} W(u)>0.
\end{equation}
\end{subequations}
That is, $W$ is a double well potential \eqref{w1}, with nondegenerate minima \eqref{w2}, satisfying moreover the standard asymptotic condition \eqref{w3} to ensure the boundedness of finite energy orbits.
To clarify the notation, we point out that $\nabla W(u(t,x))$ is the gradient of $W$ evaluated at $u(t,x)$, while $D^2W(u)(\nu,\nu)$ stands for the quadratic form $\sum_{i,j=1}^m\frac{\partial^2 W(u)}{\partial u_i \partial u_j}\nu_i \nu_j$, $\forall u=(u_1,\ldots,u_m)\in\R^m$, $\forall \nu=(\nu_1,\ldots,\nu_m)\in\R^m$. We also denote respectively by $|\cdot|$ and $\cdot$, the Euclidean norm and inner product. Finally, given smooth maps $u:\R^2\to \R^m$, $u=(u_1,\ldots,u_m)$, and $\phi:\R^2\to\R^m$, $\phi=(\phi_1,\ldots,\phi_m)$, we set
$|\nabla u|^2:=\sum_{i=1}^m|\nabla u_i|^2$, and $\nabla u\cdot\nabla\phi:=\sum_{i=1}^m\nabla u_i\cdot \nabla \phi_i$.

System \eqref{system0} as well as the corresponding O.D.E.
\begin{equation}\label{ode0}
v''(x)=\nabla W(v(x)), \ v:\R\to\R^m\  (m\geq 2),\, x\in\R,
\end{equation}
have variational structure. We denote by
\begin{equation}\label{EE}
E_\Omega(u):=\int_\Omega\Big[\frac{1}{2}|\nabla u|^2+W(u)\Big], \ \Omega\subset \R^2,
\end{equation}
and
\begin{equation}\label{JJ}
J_I(v):=\int_I\Big[\frac{1}{2}| v'|^2+W(v)\Big], \ I\subset \R,
\end{equation}
the associated energy functionals.
We also recall that a heteroclinic orbit is a solution $e\in C^2(\R;\R^m)$ of \eqref{ode0} such that
$\lim_{x\to\pm\infty}e(x)=a^\pm$. A heteroclinic orbit is called \emph{minimal} if it is a minimizer of the Action functional \eqref{JJ}
in the class $A:=\{ v\in W_{\rm loc}^{1,2}(\R;\R^m):\ \lim_{x\to\pm\infty}v(x)=a^\pm\}$, i.e. if $J_\R(e)=\min_{v\in A} J_\R(v)=:J_{\mathrm{min}}$.
Assuming \eqref{w13}, we know that there exists at least one \emph{minimal}\footnote{Note that heteroclinic orbits are not always minimal: cf. \cite[Remark 3.6.]{antonop}.} heteroclinic orbit $e$ (cf. for instance \cite{antonop}, \cite{fusco2}, \cite{stern2} or \cite{alessio3}, for a general theorem about
the existence of heteroclinic connections). In addition, since the minima $a^\pm$ are nondegenerate, the convergence to the minima $a^\pm$ is exponential for every heteroclinic orbit $e$, i.e.
\begin{equation}\label{expest}
|e(x)-a^-|\leq K e^{kx}, \forall x\leq 0, \text{ and } |e(x)-a^+|\leq K e^{-kx}, \forall x\geq 0,
\end{equation}
where the constants $k,K>0$ depend on $e$ (cf. \cite[Proposition 6.5.]{antonop}.
Clearly, if $x\mapsto e(x)$ is a heteroclinic orbit, then the maps
\begin{equation}\label{transl}
x\mapsto e^T(x):=e(x-T),  \forall T\in\R,
\end{equation}
obtained by translating $x$, are still heteroclinic orbits.

\subsection{Heteroclinic orbits in Hilbert spaces}
In the first part of this paper, we establish the existence of minimal heteroclinic orbits in a Hilbert space $\mathcal H$, under very mild assumptions (cf. Theorem \ref{connh} below). Indeed, the potential $\mathcal W:\mathcal H\to [0,+\infty]$ is assumed to be weakly lower semicontinuous and to satisfy the standard asymptotic condition \eqref{liminfh}. For the sake of the applications to P.D.E. \eqref{system0}, we only consider the standard case of a double well potential $\mathcal W$ vanishing at $e^-$ and $e^+$, but clearly our approach can be applied to more general potentials vanishing either on finite sets or on manifolds (cf. \cite{antonop} in the finite dimensional case).
Denoting by $\langle \cdot,\cdot\rangle$ (resp. $\|\cdot\|$) the inner product (resp. the norm) in $\mathcal H$, the minimal heteroclinic $U$ will be obtained as a minimizer of the Action functional:
\begin{equation}\label{action}
\mathcal J_{\R}(V):=\int_\R \Big[\frac{1}{2}\|V'(t)\|^2+\mathcal W(V(t))\Big] \dd t ,
\end{equation}
in the constrained class $\mathcal{A}$ defined by:
\[\mathcal{A}=\Big\{V\in H_{\rm loc}^{1}(\R;\mathcal H):\left.  \begin{array}{l} \langle V(t)-e^-, \nn\rangle \leq 3l_0 /4,\text{ for }t\leq t_V^-,\\
 \langle V(t)-e^-, \nn\rangle \geq l_0 /4,\text{ for }t\geq t_V^+,\end{array}\right.\text{ for some }t_V^-<t_V^+\Big\},\]
where $\nn:=\frac{e^+-e^-}{\|e^+-e^-\|}$, and  $l_0:=\|e^+-e^-\|$. Note that in the definition of $\mathcal{A}$ no limitation is imposed on the numbers $t_V^-<t_V^+$ that may largely depend on $V$.
We refer to \cite{kreuter}, \cite{pap}, \cite{caz} and \cite{brezis2}, for the general theory of Sobolev spaces of vector-valued functions.

For nonsmooth potentials, the minimizer $U$ may be considered as a heteroclinic orbit in a generalized sense, since $U(t)$ converges weakly to $e^\pm$, as $t\to\pm\infty$ (cf. \eqref{weakcv}), and furthermore $U$ satisfies the equipartition relation \eqref{equipartition}. To illustrate Theorem \ref{connh} let us take for example $\mathcal W=\chi_{\mathcal H\setminus \{e^-,e^+\}}$, where $\chi$ is the characterictic function. Then, one obtains in view of \eqref{equipartition} that (up to translations):
\begin{equation}
U(t)=
\begin{cases}
e^-&\text{for  } t\leq 0  ,\\
e^-+\sqrt{2}t \nn &\text{for } 0\leq t\leq l_0/\sqrt{2},\\
e^+ &\text{for  } t\geq l_0/\sqrt{2}.
\end{cases}
\end{equation}
We refer for instance to \cite{caf}, \cite{savin} or \cite{alikakos2}, for the study of phase transition problems involving nonsmooth potentials.

In the case where $\mathcal W\in C^1(\mathcal H;\R)$ is smooth, the minimizer $U$ is a classical solution of the system
\begin{equation}\label{odeh}
	U''(t)=\nabla \mathcal W(U(t)),  \forall t \in  \R,
	\end{equation}
where given $u\in\mathcal H$, $\nabla \mathcal W(u)$ is the element of $\mathcal H$ corresponding to $D\mathcal W(u)\in \mathcal{H'}$ by identifying $\mathcal H$ with $\mathcal H'$ via the isomorphism:
\begin{equation}
\langle \nabla \mathcal W(u),v\rangle=D\mathcal W(u)v, \ \forall v\in\mathcal H.
\end{equation}

After these explanations, we give the complete statement of Theorem \ref{connh}:

\begin{theorem}\label{connh}
Let $\mathcal H$ be a Hilbert space\footnote{The existence of a minimizer $U$ satisfying \eqref{weakcv} and \eqref{equipartition} also holds if $\mathcal H$ is a reflexive Banach space.}, and assume that $\mathcal W:\mathcal H\to [0,+\infty]$ is a weakly lower semicontinuous function satisfying
\begin{equation}\label{doublew}
	\text{$\mathcal W$ has \emph{exactly} $2$ zeros $e^-$ and $e^+$,}
\end{equation}
and
\begin{equation}\label{liminfh}
\liminf_{\|v\|\to\infty}\mathcal W(v)>0.
\end{equation}
Then, the condition
\begin{equation}\label{bounden}
\inf_{V\in\mathcal{A}}\mathcal J_{\R}(V)<+\infty,
\end{equation}
implies that $\mathcal J_{\R}$ admits a minimizer $U\in\mathcal{A}$ i.e.
 $\mathcal J_{\R}(U)=\min_{V\in\mathcal{A}}\mathcal J_{\R}(V)$,
such that
\begin{subequations}
\begin{equation}\label{weakcv}
U(t)\rightharpoonup e^\pm, \text{ as } t\to\pm\infty,
\end{equation}
\begin{equation}\label{equipartition}
\text{$\frac{1}{2}\|U'(t)\|^2=\mathcal W(U(t))$ for a.e. $t\in\R$ (equipartition relation).}
\end{equation}
\end{subequations}
Assuming moreover that $\mathcal W\in C^1(\mathcal H;\R)$, then \eqref{bounden} holds and $U\in C^2(\R;\mathcal H)$ is a classical solution of \eqref{odeh}.
\end{theorem}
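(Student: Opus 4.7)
The plan is the direct method of calculus of variations, adapted to the infinite-dimensional setting and to the translation symmetry of the problem. Let $c:=\inf_{\mathcal{A}}\mathcal J_{\R}<\infty$, and fix a minimizing sequence $V_n\in\mathcal{A}$ with $\mathcal J_{\R}(V_n)\to c$. Since both $\mathcal J_{\R}$ and $\mathcal{A}$ are invariant under translation in $t$, I first replace $V_n$ by a translate that centers its transition near $t=0$; a convenient choice is to pick $\tau_n$ with $\int_{-\infty}^{\tau_n}\mathcal W(V_n)=\frac{1}{2}\int_{\R}\mathcal W(V_n)$ (the total is positive since constants equal to $e^{\pm}$ do not lie in $\mathcal{A}$), and consider $V_n(\cdot+\tau_n)$.

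Next I establish compactness. The Dirichlet term yields $\int_{\R}\|V_n'\|^2\leq 2\mathcal J_{\R}(V_n)\leq 2(c+1)$ uniformly in $n$. For a uniform $L^\infty$-bound I use \eqref{liminfh}: picking $R,\delta>0$ with $\mathcal W(v)\geq\delta$ for $\|v\|\geq R$, if $\|V_n(t_0)\|\geq M$ for some large $M$, the absolute-continuity estimate $\|V_n(t)-V_n(t_0)\|\leq\sqrt{|t-t_0|\cdot 2(c+1)}$ forces $\|V_n(t)\|\geq R$ on an interval of length $\gtrsim (M-R)^2/(c+1)$ around $t_0$, contributing at least $\delta(M-R)^2/(c+1)$ to $\int\mathcal W(V_n)\leq c+1$; hence $M$ is bounded independently of $n$ and $t$. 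Reflexivity of $\mathcal H$ and a diagonal argument then extract a subsequence $V_n\rightharpoonup U$ in $H^1_{\rm loc}(\R;\mathcal H)$, and continuity of evaluation $H^1\to\mathcal H$ gives $V_n(t)\rightharpoonup U(t)$ for every $t$.

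Weak lower semicontinuity of $V\mapsto\int\|V'\|^2$ together with Fatou applied to the nonnegative integrand $\mathcal W(V_n(t))$ (whose $\liminf$ pointwise dominates $\mathcal W(U(t))$ by weak lsc of $\mathcal W$) yields $\mathcal J_{\R}(U)\leq\liminf\mathcal J_{\R}(V_n)=c$. To place $U$ in $\mathcal{A}$ and prove \eqref{weakcv}, note that $\int_{\R}\mathcal W(U)<\infty$ and the $L^\infty$-bound imply that along any sequence $t_k\to\pm\infty$ some subsequence of $U(t_k)$ converges weakly to a zero of $\mathcal W$, hence to $e^-$ or $e^+$ by \eqref{doublew}; the translation normalization and the one-sided constraints in the definition of $\mathcal{A}$ select the correct asymptote at each end, providing admissible $t_U^-<t_U^+$ and $U(t)\rightharpoonup e^{\pm}$. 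Combined with $\mathcal J_{\R}(U)\leq c$, this makes $U$ a minimizer.

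For \eqref{equipartition} I use a localized reparametrization: if $\frac{1}{2}\|U'\|^2\neq \mathcal W(U)$ on a set of positive measure inside some finite $[a,b]$, a speed-matched change of variables supported on $[a,b]$ produces a competitor in $\mathcal{A}$ with strictly smaller action, contradicting minimality. When $\mathcal W\in C^1(\mathcal H;\R)$, \eqref{bounden} follows by testing with the affine path $t\mapsto e^-+\frac{1+t}{2}(e^+-e^-)$ on $[-1,1]$, extended by constants, which has finite action by continuity of $\mathcal W$ on bounded sets. The Euler--Lagrange equation is obtained from variations $\phi\in C_c^\infty(\R;\mathcal H)$: $U+s\phi\in\mathcal{A}$ for all $s$ (enlarge $t_U^{\pm}$ outside $\supp\phi$), and differentiating at $s=0$ gives $\int\langle U',\phi'\rangle+D\mathcal W(U)\phi\,\dd t=0$, i.e.\ $U''=\nabla\mathcal W(U)$ distributionally; continuity of the right-hand side then upgrades $U$ to $C^2(\R;\mathcal H)$. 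The main difficulty I anticipate is the admissibility step: without a uniform lower bound on $\mathcal W$ away from $\{e^-,e^+\}$, excluding scenarios where $U$ lingers in an intermediate region for arbitrarily long times requires delicate use of the direction encoded by the one-sided constraints in $\mathcal{A}$ together with the exactly-two-zeros hypothesis \eqref{doublew}.
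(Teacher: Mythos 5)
Your overall framework (direct method, translation normalization, weak compactness from the $L^\infty$ bound, weak lower semicontinuity plus Fatou, reparametrization for equipartition, Euler--Lagrange under $C^1$ smoothness) is the right one and matches the paper's architecture. The $L^\infty$ bound argument via \eqref{liminfh} and the $1/2$-H\"older estimate is essentially the paper's Lemma \ref{lem1}, and the derivation of the Euler--Lagrange equation is standard and correct.

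The genuine gap is the admissibility step $U\in\mathcal{A}$, which you yourself flag as ``the main difficulty'' but do not resolve. Your translation normalization (choosing $\tau_n$ so that the potential energy is split in half) does not control the one-sided constraints defining $\mathcal{A}$. A minimizing sequence $V_n$ can make many back-and-forth crossings of the hyperplanes $\{\langle v-e^-,\nn\rangle=l_0/4\}$ and $\{\langle v-e^-,\nn\rangle=3l_0/4\}$ before settling; after the half-energy recentering, the weak limit $U$ can fail to satisfy $\langle U(t)-e^-,\nn\rangle\leq 3l_0/4$ for $t\leq t_U^-$, because the constraint times $t_{V_n}^\pm$ may drift off to $\pm\infty$ relative to $\tau_n$. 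The paper handles exactly this in Lemma \ref{lem1new}: it tracks the (uniformly bounded in number) crossing times $x_1(k)<\cdots<x_{2N_k}(k)$, and by an inductive selection (after \cite[Lemma~2.4]{ps}) finds indices $i_0\leq j_0$ with $x_{2j_0-1}(k)-x_{2i_0-1}(k)$ bounded while the adjacent gaps $x_{2i_0-1}(k)-x_{2i_0-2}(k)$ and $x_{2j_0}(k)-x_{2j_0-1}(k)$ go to infinity. Translating by $x_{2i_0-1}(k)$ then forces the weak limit to obey the constraints on $(-\infty,0]$ and $[T,\infty)$. Without some such combinatorial control of crossings, the half-energy translation alone is insufficient; this is the step that rules out the dichotomy/vanishing scenarios.

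A secondary inaccuracy: you assert that $\int_\R\mathcal W(U)<\infty$ implies that ``along any sequence $t_k\to\pm\infty$ some subsequence of $U(t_k)$ converges weakly to a zero of $\mathcal W$.'' This is false as stated; finite total potential energy only gives $\liminf_{|t|\to\infty}\mathcal W(U(t))=0$, so the claim holds only along particular sequences with $\mathcal W(U(t_k))\to0$. The correct route (paper's Lemma \ref{lem1}) is the opposite logical order: first put $U\in\mathcal{A}_b$, then use the constraint, equicontinuity, and the compactness of the weak closed ball to show that any weak neighbourhood of $e^+$ must eventually contain $U(t)$ for $t$ large, otherwise a sequence of disjoint intervals of fixed length carries a positive amount of potential energy each, making $\mathcal J_\R(U)=\infty$. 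You need $U\in\mathcal{A}$ first, and \eqref{weakcv} second, not the other way around.
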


The method of constrained minimization to construct the minimal heteroclinic goes back to \cite{alikakos1}. However, most of the arguments used in finite dimensional spaces, fail in the infinite dimensional case due to the lack of compactness.
Thus, in order to recover compactness on closed balls, the idea is to work with the weak topology. On the other hand, the convergence in \eqref{weakcv} is established thanks to an argument first introduced in the context of fourth order O.D.E. (cf. \cite[Lemma 2.4.]{ps}). In what follows, we will see that for some specific potentials, the convergence to the minima $e^\pm$ may hold in the strong sense (cf. \eqref{lay1b}).

To apply Theorem \ref{connh} to P.D.E. problems, one may consider the solution  $\R\times\Omega\ni (t,x)\mapsto u(t,x)\in \R^m$ (with $\Omega\subset \R^n$) of a P.D.E., as a connecting orbit $t\mapsto U(t)\in\mathcal H$, $U(t):x\mapsto [U(t)](x):=u(t,x)$, taking its values in a Hilbert space $\mathcal H$ of functions, defined according to the boundary conditions satisfied by $u$. Of course, this can be done if the initial equation can be reduced to an O.D.E. similar to \eqref{odeh}, and if the boundary conditions satisfied by $u$ are appropriate.
The scope of this paper is to provide a method for performing such a reduction, and constructing various kinds of solutions of P.D.E. problems.

\subsection{First application: heteroclinic double layers}\label{ssec:appl1}
As a first application of Theorem \ref{connh} we give a new proof, in a slightly more general setting, of the existence of heteroclinic double layers (established by Schatzman \cite{scha}), since this result is particularly relevant for the phase transition system \eqref{system0}. Indeed, this construction provides for system \eqref{system0} the first examples of two-dimensional \emph{minimal solutions}, in the sense that
\begin{equation}\label{mineq}
E_{\mathrm{supp}\, \phi}(u)\leq E_{\mathrm{supp}\, \phi}(u+\phi), \ \forall \phi\in C^1_0(\R^2;\R^m).
\end{equation}
This notion of minimality is standard for many problems in which the energy of a localized solution is actually infinite due to non compactness of the domain.
Assuming that for system \eqref{system0}, with $W$ as in \eqref{w13}, there exist (up to translations) \emph{exactly} two minimal heteroclinic orbits $e^-$ and $e^+$ which are also nondegenerate\footnote{The heteroclinic orbits $e^\pm$ are nondegenerate in the sense that $0$ is a simple eigenvalue of the linearized operators $T:W^{2,2}(\R;\R^m)\rightarrow L^2(\R;\R^m)$, $T\varphi=-\varphi^{\prime\prime}+D^2W(e^\pm)\varphi$.},
Schatzman constructed a solution of \eqref{system0} such that
\begin{subequations}\label{layer}
\begin{equation}\label{lay1}
\forall x \in\R: \  \lim_{t\to\pm\infty}u(t,x)=e^\pm(x-m^\pm), \text{ for some constants $m^\pm\in\R$},
\end{equation}
\begin{equation}\label{lay2}
\forall t\in\R: \ \lim_{x\to\pm\infty}u(t,x)=a^\pm.
\end{equation}
\end{subequations}
Moreover, the convergence in \eqref{lay2} as well as in \eqref{lay1} is exponential, due to the nondegeneracy of $a^\pm$ and $e^\pm$.
This construction has initially been performed by Alama, Bronsard and Gui \cite{abg} for potentials $W$ invariant by the reflexion which exchanges $a^\pm$.
The symmetry assumption enabled the authors to control the translation parameters $m^\pm$, since they considered only solutions which were equivariant by the reflexion. In \cite{alessio}, the Alama-Bronsard-Gui solution was constructed under the weaker assumption \eqref{parti}, and the existence of an infinity of periodic solutions of \eqref{system0} was established (cf. also \cite{alessio2}).
Recently, new proofs of Schatzman's result were given in \cite{fusco} (where a Gibbon's type conjecture was also proved), and in \cite{monteil} via minimization of the Jacobi functional.

In Theorem \ref{connh2} below we obtain Schatzman's solution as a minimal heteroclinic orbit $U$ connecting $e^\pm$ in the appropriate Hilbert space. This construction highlights the real nature of the heteroclinic double layers, and provides a clear interpretation of the equipartition property \eqref{equi22} (already observed in the aforementioned works).
The boundary conditions \eqref{lay2} suggest to set
\begin{equation}
\ee_0(x)=
\begin{cases}
a^-,&\text{ for } x\leq -1,\\
a^-+(a^+ -a^-)\frac{x+1}{2},&\text{ for }-1 \leq x\leq 1,\\
a^+,&\text{ for } x\geq 1.
\end{cases}
\end{equation}
and work in the affine subspace\footnote{To stress the analogy with Theorem \ref{connh}, we denote again by $\mathcal H$, $\mathcal A$, $\mathcal W$, and $\mathcal J$, the Hilbert space, the constrained class, the potential, and the action functional, which are relevant in this subsection.} $\mathcal H:=\ee_0+L^2(\R;\R^m)=\{u=\ee_0+h: h\in  L^2(\R;\R^m)\}$ which has the structure of a Hilbert space with the inner product
\begin{equation}\label{innerp}
\langle u,v \rangle_{\mathcal H}:=\langle (u-\ee_0),(v-\ee_0)\rangle_{L^2(\R;\R^m)}, \ \forall u,v\in \mathcal H.
\end{equation}
We denote by $\|\cdot\|_{\mathcal H}$ the norm in $\mathcal H$, and by $d_{\mathcal H}(u,v):=\|u-v\|_{L^2(\R;\R^m)}$ the corresponding distance.
We shall also consider the Hilbert space $\mathcal{ \tilde H}:=\ee_0+H^1(\R;\R^m)=\{u=\ee_0+h: h\in  H^1(\R;\R^m)\}$ with the inner product
\begin{equation}\label{innerptilde}
\langle u,v \rangle_{\mathcal{ \tilde H}}:=\langle (u-\ee_0),(v-\ee_0)\rangle_{H^1(\R;\R^m)}, \ \forall u,v\in \mathcal{ \tilde H}.
\end{equation}
Similarly, $\|\cdot\|_{\mathcal{ \tilde H}}$, and $d_{ \mathcal{\tilde H}}(u,v):=\|u-v\|_{H^1(\R;\R^m)}$ stand for the norm and the distance in $\mathcal{\tilde H}$.
In view of \eqref{expest}, it is clear that $e\in \mathcal{ \tilde H}$, for every minimal heteroclinic $e$.

Next, we define in $\mathcal H$ the \emph{effective} potential $\mathcal W:\mathcal H \to [0,+\infty]$ by
\begin{equation}
\mathcal W(u)=
\begin{cases}
J_\R(u)-J_{\mathrm{min}}, &\text{ when the distributional derivative }  u' \in L^2(\R;\R^m),\\
+\infty,&\text{ otherwise,}
\end{cases}
\end{equation}
where $J_{\mathrm{min}}=\min_{v\in A} J_\R(v)$. 
Note that $\mathcal W\geq 0$, since $u' \in L^2(\R;\R^m)$ implies that $\lim_{x\to\pm\infty}u(x)=a^\pm$ i.e. $u \in A$, and thus $J_\R(u)\geq J_{\mathrm{min}}$. It is also obvious that $\mathcal W$ only vanishes on the set $F$ of minimal heteroclinics. More generally than in \cite{scha}, we assume that this set satisfies
\begin{equation}\label{parti}
F=F^-\cup F^+, \text{ with }  F^-\neq \varnothing, \  F^+\neq \varnothing, \text{ and } d_{\mathrm{min}}:=d_{\mathcal H}(F^-,F^+)>0
\end{equation}
(where $d_{\mathcal H}(F^-,F^+):=\inf\{ \|e^--e^+\|_{L^2(\R;\R^m)}: e^-\in F^-, e^+\in F^+\}$).
For instance, if $F$ contains (up to translations) a finite number of elements $e_1$,...,$e_N$, one may take $F^-=\{ x\mapsto e_1(x-T_1): T_1\in\R\}$, and $F^+=\{ x\mapsto e_k(x-T_k): T_k\in\R, k=2,\ldots,N\}$ (cf. \cite{abg} and \cite{scha}). In this case it is easy to check that $d_{\mathcal H}(F^-,F^+)>0$, since the map $\R\ni T\mapsto  e^T(x)=e(x-T)\in \mathcal H$ is continuous for every $e\in F$, and the images of two distinct minimal heteroclinics do not intersect. In Lemma \ref{lem2} below, we give explicit examples of potentials for which \eqref{parti} holds.

Finally we define the constrained class
\[\mathcal{A}=\Big\{V\in H_{\rm loc}^{1}(\R;\mathcal H):\left.  \begin{array}{l} d_{\mathcal H}(V(t),F^-) \leq d_{\mathrm{min}}/4,\text{ for }t\leq t_V^-,\\
d_{\mathcal H}(V(t),F^+) \leq d_{\mathrm{min}}/4,\text{ for }t\geq t_V^+,\end{array}\right.\text{ for some }t_V^-<t_V^+\Big\},\]
and the functional\footnote{In the proof of Theorem \ref{connh2}, it will appear how the energy functional $E$ of system \eqref{system0} is related to $\mathcal J$, and why the definition of $\mathcal W$ is relevant.}
\begin{equation}\label{action}
\mathcal J_{\R}(V):=\int_\R \Big[\frac{1}{2}\|V'(t)\|^2_{L^2(\R;\R^m)}+\mathcal W(V(t))\Big] \dd t .
\end{equation}
Since the effective potential $\mathcal W$ has been normalized by substracting the constant $ J_{\mathrm{min}}$ from $J_\R$, it follows that $\inf_{\mathcal A}\mathcal J_{\R}<\infty$. All variational constructions of the heteroclinic double layers are based on the minimization of this renormalized energy (cf. also \cite{alikakos1} for some other applications).
Proceeding as in Theorem \ref{connh} we are going to show that this solution is actually a minimizer of $\mathcal J_{\R}$ in $\mathcal A$:
\begin{theorem}\label{connh2}
Assume the potential $W$ satisfies \eqref{w13}, \eqref{parti}, and one of the following
\begin{subequations}
	\begin{equation}\label{eu1}
	\text{either there exists $\rho>0$ such that $W(su)\geq W(u)$ for $s \geq 1$ and $|u|=\rho$.}
	\end{equation}
	\begin{equation}\label{eu2}
	\text{or } \limsup_{|u|\to\infty} \frac{|\nabla W(u)|}{|u|^q}<\infty, \text{ for some $q\geq 2$.}
	\end{equation}
\end{subequations}
Then, $\mathcal J_{\R}$ admits a minimizer $U\in\mathcal{A}$ i.e. $\mathcal J_{\R}(U)=\min_{V\in\mathcal{A}}\mathcal J_{\R}(V)$, which is
	such that
	\begin{itemize}
		\item[(i)] $u\in C^2(\R^2;\R^m)$ where $u(t,x):=[U(t)](x)$, $t\mapsto U(t)\in \mathcal H$.
		\item[(ii)] $u$ solves \eqref{system0}
				together with the boundary conditions
		\begin{subequations}\label{layer}
			\begin{equation}\label{lay1b}
			\lim_{t\to\pm\infty}d_{\mathcal{ H}}(U(t),F^\pm)=0, 		\end{equation}
			\begin{equation}\label{lay2b}
			\lim_{x\to\pm\infty}u(t,x)=a^\pm \text{, uniformly when $t$ remains bounded}.
			\end{equation}
		\end{subequations}
				\item[(iii)] For every $t\in\R$, $u$ satisfies the equipartition relation $\frac{1}{2}\|U'(t)\|^2_{\mathcal H}=\mathcal W(U(t))$, or equivalently:
		\begin{equation}\label{equi22}
		\frac{1}{2}\int_\R|u_{t}(t,x)|^2\dd x=\int_\R \Big[\frac{1}{2}|u_x(t,x)|^2+ W(u(t,x))\Big] \dd x -J_{\mathrm{min}}.
		\end{equation}
		\item[(iv)] $u$ is a minimal solution of \eqref{system0} (cf. \eqref{mineq}).
				\end{itemize}
In addition, if \eqref{eu1} holds and $\mathcal W$ satisfies the nondegeneracy condition
\begin{equation}\label{nondegW}
\liminf_{d_{\mathcal H}(u,F)\to 0}\frac{\mathcal W(u)}{(d_{\mathcal H}(u,F))^2}>0,
\end{equation}
then there exist $e^\pm\in F^\pm$, and constants $k,K>0$ such that
	\begin{subequations}\label{layernew}
		\begin{equation}\label{lay1bnew}
\| U(t)-e^+\|_{\mathcal{\tilde H}}\leq K e^{-kt}, \, \forall t\geq 0, \text{ and } \|  U(t)-e^-\|_{\mathcal{\tilde H}}\leq K e^{kt}, \, \forall t\leq 0,	
		\end{equation}
		\begin{equation}\label{lay2bnew}
		| u(t,x)-a^+|\leq K e^{-kx}, \forall t\in\R,\forall x\geq 0, \text{ and }  | u(t,x)-a^-|\leq K e^{kx},  \forall t\in\R,\forall x\leq 0.
		\end{equation}
	\end{subequations}
\end{theorem}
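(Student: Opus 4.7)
The plan is to closely parallel the proof of Theorem \ref{connh}, with modifications to accommodate the fact that the zero set of the effective potential $\mathcal W$ is the (typically infinite) set $F=F^-\cup F^+$ of minimal heteroclinics rather than two isolated points. First I would verify weak lower semicontinuity of $\mathcal W$ on $\mathcal H$: if $u_n\rightharpoonup u$ in $\mathcal H$ with $\sup_n J_\R(u_n)<\infty$, then $\{u_n\}$ is bounded in $H^1_{\mathrm{loc}}(\R;\R^m)$, so up to extraction $u_n\to u$ in $L^2_{\mathrm{loc}}$ and a.e., and Fatou combined with weak l.s.c.\ of the $L^2$-norm of $u'$ yields $J_\R(u)\leq\liminf J_\R(u_n)$. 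Then, for a minimizing sequence $V_n\in\mathcal A$ for $\mathcal J_\R$, I would translate in $t$ so that the crossing times are normalized, extract a weak $H^1_{\mathrm{loc}}(\R;\mathcal H)$-limit $U$ with $V_n(t)\rightharpoonup U(t)$ pointwise, and verify $U\in\mathcal A$. This admissibility step is the main subtlety, since $d_{\mathcal H}(\cdot,F^\pm)$ is not automatically weakly l.s.c.\ on the non-convex $F^\pm$; it will be handled by exploiting the uniform energy bound on the transition region, where $\mathcal W(V_n(t))$ stays bounded below, together with the one-sided structure forced by the constraints. Weak l.s.c.\ of $\mathcal J_\R$ then yields the minimizer.

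Second, the equipartition relation \eqref{equi22} follows from inner variations $t\mapsto U(\phi_s(t))$ for a one-parameter family of diffeomorphisms of $\R$, exactly as in Theorem \ref{connh}. To obtain (i)--(ii), I would test $\mathcal J_\R$ against outer variations $U+\epsilon\Phi$ with $\Phi(t,x)=\phi(t,x)\in C^\infty_c(\R^2;\R^m)$: for small $\epsilon$ such perturbations preserve membership in $\mathcal A$, so the vanishing of the first variation yields the weak form of \eqref{system0}. An $L^\infty$ bound on $u$ is derived from the growth hypothesis: under \eqref{eu1} one projects $u$ radially onto $\{|u|\leq\rho\}$ without increasing energy; under \eqref{eu2} a Moser-type iteration starting from the $H^1_{\mathrm{loc}}$ bound delivers the bound. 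Schauder theory (using $W\in C^{2,\alpha}$) then upgrades $u$ to $C^2(\R^2;\R^m)$ and a classical solution. For \eqref{lay2b}, finiteness of $\mathcal W(U(t))$ for a.e.\ $t$ means $u(t,\cdot)\in A$ with limits $a^\pm$, and uniform regularity propagates this uniformly on bounded $t$-intervals. The convergence \eqref{lay1b} follows by combining $\mathcal J_\R(U)<\infty$ and equipartition: along sequences $t_n^\pm\to\pm\infty$ one has $\mathcal W(U(t_n^\pm))\to 0$, forcing $d_{\mathcal H}(U(t_n^\pm),F)\to 0$, and the constrained admissibility plus a tail-of-action argument upgrades this to full convergence to the correct component $F^\pm$. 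The minimality property (iv) is direct, since any compactly supported competitor for $u$ corresponds to an admissible variation of $U$ in $\mathcal A$, which by minimality of $U$ cannot decrease the local energy.

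Finally, for the exponential decay \eqref{lay1bnew}--\eqref{lay2bnew} under \eqref{eu1} and \eqref{nondegW}, the strategy is to couple equipartition with the quadratic nondegeneracy $\mathcal W(u)\geq c_0\,d_{\mathcal H}(u,F)^2$. Setting $\delta(t):=d_{\mathcal H}(U(t),F)$, the relation $\tfrac{1}{2}\|U'(t)\|_{\mathcal H}^2=\mathcal W(U(t))\geq c_0\,\delta(t)^2$ together with a comparison of the tail of $\mathcal J_\R$ past time $t$ leads to a differential inequality $\delta'(t)\leq -k\,\delta(t)$ for large $|t|$, yielding exponential decay in $\mathcal H$; a Cauchy argument then selects a specific limit $e^\pm\in F^\pm$, and interior elliptic estimates upgrade the rate to the $\mathcal{\tilde H}$-norm. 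The pointwise decay \eqref{lay2bnew} relies on the nondegeneracy \eqref{w2} of $a^\pm$ together with the $L^\infty$ bound on $u$, via a standard barrier argument comparing $|u-a^\pm|$ with an exponentially decaying supersolution of $\Delta\xi=c\xi$ on a half-plane, yielding a rate uniform in $t$. The main obstacles throughout are the preservation of the admissibility constraints under weak convergence in the non-convex setting of $F$, and the passage from the abstract Hilbert-space minimizer to concrete PDE regularity and pointwise exponential rates, for which the growth assumption \eqref{eu1}/\eqref{eu2} and the nondegeneracy hypotheses \eqref{w2}, \eqref{nondegW} are essential.
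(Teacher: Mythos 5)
Your outline captures the overall strategy (variational minimization in $\mathcal H$, weak l.s.c.\ of $\mathcal W$, elliptic regularity, equipartition, and nondegeneracy for decay), but it has a genuine gap in the compactness step. When you say ``translate in $t$ so that the crossing times are normalized, extract a weak $H^1_{\mathrm{loc}}(\R;\mathcal H)$-limit $U$,'' you implicitly need $\|V_n(t_0)\|_{\mathcal H}$ to be bounded for some fixed $t_0$; otherwise no weak subsequential limit exists. In Theorem~\ref{connh} this was supplied by Lemma~\ref{lem1}, which rests on the coercivity hypothesis \eqref{liminfh}. Here, however, the effective potential $\mathcal W$ on $\mathcal H=\ee_0+L^2$ is \emph{not} coercive: $F$ is unbounded in $\mathcal H$, because the $x$-translates $e^T$ of any heteroclinic have $\|e^T\|_{\mathcal H}\to\infty$ as $|T|\to\infty$ yet $\mathcal W(e^T)=0$. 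So a minimizing sequence can drift off to infinity in $\mathcal H$ even if the transition times are fixed. The paper handles this by translating simultaneously in $x$ (via $L^{T_k}$) at the transition time $\lambda_k^-$, using Lemma~\ref{propcon}~(ii) to place the nearby heteroclinic $e_k$ inside a fixed ball, hence $\|\bar V_k(0)\|_{\mathcal H}\leq\eta+\gamma$. Without this $x$-renormalization your construction of the weak limit fails, and this cannot be repaired merely by ``exploiting the uniform energy bound on the transition region.''

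Two further issues. First, for the exponential decay you write that equipartition plus nondegeneracy ``leads to a differential inequality $\delta'(t)\leq -k\,\delta(t)$.'' This is not correct: equipartition gives a \emph{lower} bound $\|U'(t)\|\geq\sqrt{2c_0}\,\delta(t)$, not an upper bound, and $\delta(t)=d_{\mathcal H}(U(t),F)$ is not even obviously Lipschitz with a sign; the correct argument (as in the paper) compares $\mathcal J_{(-\infty,t]}(U)$ with a competitor $Z$ that glues linearly onto a nearest heteroclinic, producing a Gronwall inequality for the \emph{cumulative} quantity $\theta(t)=\int_{-\infty}^t(\delta^2+\mathcal W)$, after which Cauchy--Schwarz and the Lipschitz estimate for $t\mapsto\mathcal W(U(t))$ (which itself requires elliptic decay estimates) yield the pointwise exponential rate and the $\mathcal{\tilde H}$-convergence. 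Second, under \eqref{eu2} you propose a Moser iteration for a global $L^\infty$ bound; the paper does not attempt this and instead uses only $u\in L^q_{\mathrm{loc}}$ together with the polynomial growth of $\nabla W$ for dominated convergence, then local elliptic regularity. A global $L^\infty$ bound under \eqref{eu2} alone is not obviously available, and in fact \eqref{layernew} is asserted only under \eqref{eu1}, precisely because the radial projection of Lemma~\ref{bU} gives the uniform bound there.
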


To establish Theorem \ref{connh2}, the arguments in the proof of Theorem \ref{connh} need to be adjusted, since the set $F$ is unbounded. However, $\mathcal W$ and $F$ have nice properties, that allow us to address the lack of compactness issue. Indeed, $F$ intersected with closed balls of $\mathcal H$ is compact (cf. Lemma \ref{propcon} (i)), and $d_{\mathcal{\tilde H}}(u,F)\to 0$, as $\mathcal W(u)\to 0$ (cf. Lemma \ref{lem1w} (ii)).

Theorem \ref{connh2} outlines the hierarchical structure of solutions of \eqref{system0}, since by taking the limit of $u(t,x)$ as $t\to\pm\infty$ (resp. $x\to\pm\infty$), lower dimensional solutions are obtained. There is also a striking analogy between the functionals $J $ (cf. \eqref{JJ}) and $\mathcal J$. On the one hand, the zeros $a^\pm$ of $W$ (i.e. the global minimizers of $J$) have their counterparts in the minimal heteroclinics $e\in F$, which are the zeros of $\mathcal W$ (and the global minimizers of $\mathcal J$). On the other hand, the heteroclinic orbits of \eqref{ode0} (one dimensional solutions) have their counterparts in the heteroclinic orbit $U$ provided by Theorem \ref{connh2} which corresponds to a two dimensional solution of \eqref{system0}.

Finally, we point out that the shape of heteroclinics can be very complicated (cf. \cite{stern}), and that a nondegeneracy assumption similar to \eqref{nondegW} is needed to ensure the convergence of the orbit $U$ at $\pm\infty$, even in finite dimensional spaces (cf. \cite[Corollary 6.3.]{antonop}). The nondegeneracy assumption considered in \cite{scha} implies the existence of  $\alpha,\beta>0$ such that $d_{\mathcal{\tilde H}}(u,F)\leq\beta \Rightarrow \mathcal W(u)\geq \alpha (d_{\mathcal{\tilde H} }(u,F))^2$ (cf. \cite[Lemma 4.5.]{scha}). Clearly, this assumption is stronger than \eqref{nondegW}.

\subsection{Second application:}
In Theorem \ref{connh2} we constructed a heteroclinic orbit $U$ connecting at $\pm\infty$ the subsets $F^\pm$ in the Hilbert space $\mathcal{ H}$. Going further one may ask: what kind of solution is obtained if instead of $\mathcal H$, we consider another space? Assuming that $W$ satisfies \eqref{w13} as well as
\begin{equation}\label{partitilde}
F=F^-\cup F^+, \text{ with }  F^-\neq \varnothing, \  F^+\neq \varnothing, \text{ and } \tilde d_{\mathrm{min}}:=d_{\mathcal{\tilde H}}(F^-,F^+)>0,
\end{equation}
(cf. subsection \ref{ssec:appl1} for the definition of $\mathcal{\tilde H}$, $F$, and $\mathcal W$), we shall construct in this subsection a heteroclinic orbit $\tilde U$ connecting at $\pm\infty$ the subsets $F^\pm$ in $\mathcal{\tilde H}$.
This new orbit $\tilde U$ produces a heteroclinic double layers solution $\tilde u$ to the fourth order system
\begin{equation}\label{fourth}
\tilde u_{ttxx}(t,x)=\Delta \tilde u(t,x)-\nabla W(\tilde u(t,x)), \, \tilde u:\R^2\to\R^m.
\end{equation}
Proceeding as in Theorems \ref{connh} and \ref{connh2}, we shall establish that $\tilde U$ is a minimizer of the functional
\begin{equation}\label{actiontilde}
\mathcal{\tilde J}_{\R}(V):=\int_\R \Big[\frac{1}{2}\|V'(t)\|^2_{H^1(\R;\R^m)}+\mathcal W(V(t))\Big] \dd t .
\end{equation}
in the constrained class
\[\mathcal{\tilde A}=\Big\{V\in H_{\rm loc}^{1}(\R;\mathcal{\tilde H}):\left.  \begin{array}{l} d_{\mathcal{\tilde H}}(V(t),F^-) \leq \tilde d_{\mathrm{min}}/4,\text{ for }t\leq t_V^-,\\
d_{\mathcal{\tilde H}}(V(t),F^+) \leq \tilde d_{\mathrm{min}}/4,\text{ for }t\geq t_V^+,\end{array}\right.\text{ for some }t_V^-<t_V^+\Big\}.\]
\begin{theorem}\label{connh3}
Assume the potential $W$ satisfies \eqref{w13} and \eqref{partitilde}. Then, $\mathcal{ \tilde J}_{\R}$ admits a minimizer $\tilde U\in\mathcal{\tilde A}$ i.e. $\mathcal{\tilde J}_{\R}(\tilde U)=\min_{V\in\mathcal{\tilde A}}\mathcal{\tilde J}_{\R}(V)$, which is
	such that
	\begin{itemize}
		\item[(i)] $\tilde U\in C^2(\R;\mathcal{\tilde H})$ is a classical solution of system $\tilde U''(t)=\nabla \mathcal W(\tilde U(t))$, where $\mathcal W\in C^1(\mathcal{\tilde H};[0,\infty))$ (cf. Lemma \ref{lem1w} (iii)).
		\item[(ii)] Setting $\tilde u(t,x):=[\tilde U(t)](x)$, $t\mapsto \tilde U(t)\in \mathcal{\tilde H}$, we have $\tilde u \in H^1_{\mathrm{loc}}(\R^2;\R^m)$, $\tilde u_t, \, \tilde u_{tx}\in L^2(\R^2;\R^m)$, $\tilde u_x\in L^2((\alpha,\beta)\times \R;\R^m)$, $\forall [\alpha,\beta]\subset\R$, and $\tilde u$ is a weak solution of system \eqref{fourth}:
		\begin{equation}\label{wpde}
		\int_{\R^2} (\tilde u_{tx}\cdot \phi_{tx}+\nabla \tilde u\cdot\nabla \phi +\nabla W(\tilde u)\cdot \phi)=0, \ \forall \phi \in C^{2}_0(\R^2;\R^m),
		\end{equation}
 satisfying the boundary conditions
		\begin{subequations}\label{layertil}
			\begin{equation}\label{lay1btil}
			\lim_{t\to\pm\infty}d_{\mathcal{\tilde H}}(\tilde U(t),F^\pm)=0, 		\end{equation}
			\begin{equation}\label{lay2btil}
			\lim_{x\to\pm\infty}\tilde u(t,x)=a^\pm, \text{ uniformly when $t$ remains bounded}.
			\end{equation}
		\end{subequations}
				\item[(iii)] For every $t\in\R$, $\tilde u$ satisfies the equipartition relation $\frac{1}{2}\|\tilde U'(t)\|^2_{\mathcal{\tilde H}}=\mathcal W(\tilde U(t))$, or equivalently:
		\begin{equation}\label{equi22}
		\frac{1}{2}\int_\R(|\tilde u_{t}(t,x)|^2+|\tilde u_{tx}(t,x)|^2)\dd x=\int_\R \Big[\frac{1}{2}|\tilde u_x(t,x)|^2+ W(\tilde u(t,x))\Big] \dd x -J_{\mathrm{min}}.
		\end{equation}
		\item[(iv)] $u$ is a minimal solution of system \eqref{fourth} in the sense that
\begin{equation}\label{wmin}
\tilde E_{\mathrm{supp}\, \phi}(\tilde u )\leq \tilde E_{\mathrm{supp}\, \phi}(\tilde u+\phi ), \ \forall \phi\in C^2_0(\R^2;\R^m),
\end{equation}
where $\tilde E_\Omega(u):=\int_\Omega\big[\frac{1}{2}(|u_{tx}|^2+|\nabla u|^2)+W(u)\big]$ ($\Omega\subset \R^2$), is the energy functional associated to \eqref{fourth}.
				\end{itemize}
In addition, if  $\mathcal W$ satisfies the nondegeneracy condition
\begin{equation}\label{nondegWtil}
\liminf_{d_{\mathcal{\tilde H}}(u,F)\to 0}\frac{\mathcal W(u)}{(d_{\mathcal{\tilde H}}(u,F))^2}>0,
\end{equation}
then there exist $e^\pm\in F^\pm$, and constants $k,K>0$ such that
		\begin{equation}\label{lay1newtil}
		\| \tilde U(t)-e^+\|_{\mathcal{\tilde H}}\leq K e^{-kt}, \, \forall t\geq 0, \text{ and } \| \tilde U(t)-e^-\|_{\mathcal{\tilde H}}\leq K e^{kt}, \, \forall t\leq 0,		\end{equation}
	and the convergence in \eqref{lay2btil} is uniform for $t\in\R$.
\end{theorem}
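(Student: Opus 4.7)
The plan is to follow the two-stage strategy of Theorem \ref{connh2}: first produce $\tilde U$ as a minimizer of $\mathcal{\tilde J}_\R$ in $\mathcal{\tilde A}$, then reinterpret it as a PDE solution. Two features distinguish the present setting from Theorem \ref{connh2}, and they reinforce each other. The replacement of the $L^2$-inner product on $\mathcal H$ by the $H^1$-inner product on $\mathcal{\tilde H}$ makes $\mathcal W$ genuinely $C^1$-smooth (a fact quoted from Lemma \ref{lem1w}(iii)), which is why the minimizer will satisfy a classical abstract ODE $\tilde U''=\nabla\mathcal W(\tilde U)$ rather than only a weak one. Conversely, computing the Riesz representative of $D\mathcal W(u)$ with respect to $\langle\cdot,\cdot\rangle_{H^1}$ forces an extra pair of $x$-derivatives into the equation, and this is precisely what turns \eqref{system0} into the fourth-order system \eqref{fourth}.

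For existence, a minimizing sequence $V_n\in\mathcal{\tilde A}$ is bounded in $H^1_{\mathrm{loc}}(\R;\mathcal{\tilde H})$ thanks to $\int_\R\|V_n'(t)\|_{H^1}^2\,\dd t\le C$, and the definition of $\mathcal{\tilde A}$ together with Cauchy--Schwarz gives $\sup_{[-T,T]}\|V_n(t)\|_{\mathcal{\tilde H}}\le C(T)$. Reflexivity of $\mathcal{\tilde H}$ then yields a subsequence with $V_n(t)\rightharpoonup\tilde U(t)$ in $\mathcal{\tilde H}$ for every $t$. Weak lower semicontinuity of the kinetic term and of $\mathcal W$ (itself inherited from the $H^1$-weak lower semicontinuity of $J_\R$) gives $\mathcal{\tilde J}_\R(\tilde U)\le\liminf\mathcal{\tilde J}_\R(V_n)$, and the threshold-crossing argument of Theorem \ref{connh2}, using the positivity of $\tilde d_{\mathrm{min}}$ in \eqref{partitilde}, confirms $\tilde U\in\mathcal{\tilde A}$.

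The main new computation is the passage from $\tilde U''(t)=\nabla\mathcal W(\tilde U(t))$ in $\mathcal{\tilde H}$ to the weak PDE \eqref{wpde}. Differentiating the effective potential yields $D\mathcal W(u)h=\int_\R(u_x\cdot h_x+\nabla W(u)\cdot h)\,\dd x$ for $h\in H^1$, while $\langle\tilde U''(t),h\rangle_{\mathcal{\tilde H}}=\int_\R(\tilde u_{tt}\cdot h+\tilde u_{ttx}\cdot h_x)\,\dd x$. Pairing the abstract ODE against a separable test $\phi(t,x)=h(x)\psi(t)$ and integrating by parts twice in $t$ gives
\begin{equation*}
\int_{\R^2}\big(\tilde u_t\cdot\phi_t+\tilde u_{tx}\cdot\phi_{tx}+\tilde u_x\cdot\phi_x+\nabla W(\tilde u)\cdot\phi\big)\,\dd x\,\dd t=0,
\end{equation*}
which, after density extension from separable tensors to general $\phi\in C^2_0(\R^2;\R^m)$, is precisely \eqref{wpde} (since $\nabla\tilde u\cdot\nabla\phi=\tilde u_t\cdot\phi_t+\tilde u_x\cdot\phi_x$). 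The regularity claims in item (ii) read off directly: $\tilde u_t,\tilde u_{tx}\in L^2(\R^2)$ since $\int_\R\|\tilde U'(t)\|_{H^1}^2\,\dd t\le 2\mathcal{\tilde J}_\R(\tilde U)$, and $\tilde u_x\in L^2((\alpha,\beta)\times\R)$ follows from continuity of $t\mapsto\|\tilde U(t)\|_{\mathcal{\tilde H}}$. I expect this ODE-to-PDE translation, and the justification of the integrations by parts with only $C^2(\R;\mathcal{\tilde H})$-regularity available (so no classical $x$-derivative beyond first order is present pointwise in $t$), to be the main technical obstacle; everything else reuses mechanisms already in place.

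The remaining items are direct transcriptions. Equipartition \eqref{equi22} follows from testing minimality against the reparametrization $t\mapsto\tilde U((1+\varepsilon)t)$ and differentiating at $\varepsilon=0$. Minimality \eqref{wmin} is immediate because any $\phi\in C^2_0(\R^2;\R^m)$ yields a competitor whose action differs from $\mathcal{\tilde J}_\R(\tilde U)$ by exactly $\tilde E_{\mathrm{supp}\,\phi}(\tilde u+\phi)-\tilde E_{\mathrm{supp}\,\phi}(\tilde u)$. For \eqref{lay1btil}, finiteness of $\mathcal{\tilde J}_\R(\tilde U)$ forces $\mathcal W(\tilde U(t_n))\to 0$ along suitable sequences $t_n\to\pm\infty$, hence $d_{\mathcal{\tilde H}}(\tilde U(t_n),F)\to 0$ by the $\mathcal{\tilde H}$-version of Lemma \ref{lem1w}(ii), and the class $\mathcal{\tilde A}$ pins the limits to $F^\pm$. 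Finally, under \eqref{nondegWtil}, the standard comparison argument produces a differential inequality of the form $\bigl(\|\tilde U(t)-e^\pm\|_{\mathcal{\tilde H}}^2\bigr)''\ge 4c\,\|\tilde U(t)-e^\pm\|_{\mathcal{\tilde H}}^2$ with $c>0$, yielding \eqref{lay1newtil}; the 1D Morrey embedding $H^1\hookrightarrow C^0_b$ then upgrades \eqref{lay2btil} to uniformity in $t\in\R$.
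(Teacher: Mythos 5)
Your overall strategy — obtain $\tilde U$ by constrained minimization of $\mathcal{\tilde J}_\R$, invoke the $C^1$-smoothness of $\mathcal W$ on $\mathcal{\tilde H}$ to get the abstract ODE, and then read off the fourth-order weak PDE from the $H^1$-Riesz representation — is exactly the paper's route, and you correctly identify why hypotheses \eqref{eu1}/\eqref{eu2} of Theorem \ref{connh2} disappear here. The ODE-to-PDE translation is also essentially right (although one integration by parts in $t$ suffices, not two). However, there are two genuine gaps.

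First, in the existence step you assert that ``the definition of $\mathcal{\tilde A}$ together with Cauchy--Schwarz gives $\sup_{[-T,T]}\|V_n(t)\|_{\mathcal{\tilde H}}\le C(T)$.'' This is not true as stated: the constraint $d_{\mathcal{\tilde H}}(V_n(t),F^-)\le\tilde d_{\mathrm{min}}/4$ does not bound $\|V_n(t)\|_{\mathcal{\tilde H}}$, because $F^-$ is \emph{unbounded} (it is translation-invariant in $x$), and moreover $t_{V_n}^\pm$ may drift off to $\pm\infty$ with $n$. To recover compactness one must translate the minimizing sequence both in $t$ (so that the transition happens near $t=0$, as in Lemma \ref{lem1new}) and in $x$ (so that $\bar V_k(0)$ stays in a fixed ball of $\mathcal{\tilde H}$, using Lemma \ref{propcon}(ii)). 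These translations, and the verification that the weak limit still belongs to $\mathcal{\tilde A}$, are the technical core of the existence argument carried over from Theorem \ref{connh2}; omitting them leaves a real hole.

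Second, for the exponential decay you propose a second-order differential inequality $\bigl(\|\tilde U(t)-e^\pm\|_{\mathcal{\tilde H}}^2\bigr)''\ge 4c\,\|\tilde U(t)-e^\pm\|_{\mathcal{\tilde H}}^2$. This does not follow from \eqref{nondegWtil}. Computing $\bigl(\|\tilde U-e\|^2\bigr)'' = 2\|\tilde U'\|^2 + 2\langle\nabla\mathcal W(\tilde U),\tilde U-e\rangle$, the sign of the last term would require a \emph{convexity}-type bound on $D\mathcal W$ near $F$, whereas \eqref{nondegWtil} is only a quadratic lower bound on $\mathcal W$ itself; worse, $F$ is a one-parameter family, so the translation direction lies in the kernel of any such quadratic form and the inequality cannot hold with the nearest $e\in F$. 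In addition, you would need to single out a specific $e^\pm\in F^\pm$ \emph{before} writing the inequality, which is circular since identifying the limit is precisely what has to be proved. The argument in the paper is first-order: it uses the variational comparison with a spliced competitor to get $\mathcal J_{(-\infty,t]}(\tilde U)\le\mathcal W(\tilde U(t))+(C+1)\,d^2_{\mathcal{\tilde H}}(\tilde U(t),F^-)$, combines this with \eqref{nondegWtil} to get a Gronwall inequality $\gamma\theta\le\theta'$ for the integral $\theta(t)=\int_{-\infty}^t\bigl(d^2_{\mathcal{\tilde H}}(\tilde U,F^-)+\mathcal W(\tilde U)\bigr)$, and then uses equipartition and integrability of $\|\tilde U'\|_{\mathcal{\tilde H}}$ to extract the limit $e^-$ and the rate. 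That mechanism cannot be replaced by the second-order ODE comparison under the hypotheses actually assumed.
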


\subsection{Other possible applications}
The previous method applies directly to construct heteroclinic double layers for the systems associated to the energy functionals  $E_\Omega(u)=\int_{\Omega}\big[  \big|\frac{\partial u}{\partial t}\big|^q +\big|\frac{\partial u}{\partial x}\big|^p+ W(u)\big]$, with $p,q\in(1,\infty)$, $u:\R^2\to\R^m$, $\Omega\subset \R^2$, and $W$ as in \eqref{w13}.
On the other hand, we expect that  Theorem \ref{connh} can be extended to fourth order systems by considering the functional $\mathcal J_{\R}(V)=\int_{\R} \big[\frac{1}{2}\|V''(t)\|^2+\mathcal W(V(t),V'(t))\big] \dd t $ (cf. \cite{ps} for the corresponding result in finite dimensional spaces). As a consequence, a heteroclinic double layers solution should be obtained for the system
\begin{equation*}
\Delta^2 u -\beta \Delta u +\nabla W(u)=0, \ u:\R^2\to\R^m, \ \beta\geq 0, \ W:\R^m\to [0,\infty),
\end{equation*}
which is called the extended Fisher-Kolmogorov equation. Finally, due to the variety of choices for the space $\mathcal H$, several types of boundary conditions may be considered in the applications of Theorem \ref{connh}.

\section{Proof of Theorem \ref{connh}}\label{sec:sec1}

We first notice that since $\mathcal W:\mathcal H\to[0,+\infty]$ is weakly lower semicontinuous, the function $t\mapsto \mathcal W(V(t))$ is lower semicontinuous (thus measurable), for every $V\in  W_{\rm loc}^{1,2}(\R;\mathcal H)$. Assumption \eqref{bounden} is satisfied for instance if $\mathcal W$ is bounded on the line segment $[e^-,e^+]$. Indeed, in this case the map $\VV_0\in\mathcal A$ defined by
\begin{equation}\label{minfun}
\VV_0(t)=\begin{cases}
e^-,&\text{ for } t\leq 0,\\
e^-+t(e^+ -e^-),&\text{ for }0 \leq t\leq 1,\\
e^+,&\text{ for } t\geq 1,
\end{cases}
 \end{equation}
is such that $\mathcal J_\R(\VV_0)<+\infty$. In what follows we assume that
$$\inf_{V\in\mathcal{A}}\mathcal J_{\R}(V)<\mathcal J_0, \text{ for a constant $\mathcal J_0 <+\infty$},$$ and we set
$\mathcal{A}_b=\{V\in\mathcal A: \mathcal J_{\R}(V)\leq \mathcal J_0\}$. It is clear that\[\inf_{V\in\mathcal{A}}\mathcal J_{\R}(V)=\inf_{V\in\mathcal{A}_b}\mathcal J_{\R}(V)<+\infty.\]
Our next claim is that finite energy orbits are equicontinuous and uniformly bounded:

\begin{lemma}\label{lem1}
There exist $M, M'>0$ such that $\sup_\R\|V(t)\|\leq M$, and $\|V(t_2)-V(t_1)\|\leq M' |t_2-t_1|^{1/2}$,
$\forall t_1, t_2\in\R$, $\forall V\in\mathcal{A}_b$. Moreover every map $V\in\mathcal A_b$ satisfies $V(t)\rightharpoonup e^\pm$, as $t\to\pm\infty$.
\end{lemma}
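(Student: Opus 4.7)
My plan is to handle the three claims in the order they are stated, with the weak convergence at $\pm\infty$ being the substantive step.

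\textbf{Quantitative a priori bounds.} For any $V \in \mathcal{A}_b$, Cauchy--Schwarz applied to the fundamental theorem of calculus in $H^1_{\mathrm{loc}}(\R;\mathcal{H})$ gives
\[
\|V(t_2) - V(t_1)\| \leq \int_{t_1}^{t_2} \|V'(s)\|\, \dd s \leq |t_2-t_1|^{1/2}\sqrt{2\mathcal{J}_0},
\]
so $M' := \sqrt{2\mathcal{J}_0}$ handles the H\"older claim. For the uniform norm bound, hypothesis \eqref{liminfh} provides $R_0, \delta > 0$ with $\mathcal{W}(v) \geq \delta$ whenever $\|v\| \geq R_0$. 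If $\|V(t_0)\| \geq R_0 + r$ at some point, the H\"older estimate keeps $\|V(t)\| \geq R_0$ on the interval $\{|t-t_0| \leq (r/M')^2\}$, contributing at least $2\delta(r/M')^2$ to $\int_\R \mathcal{W}(V(t))\, \dd t \leq \mathcal{J}_0$. This forces $r \leq M'\sqrt{\mathcal{J}_0/(2\delta)}$, so one may take $M := R_0 + M'\sqrt{\mathcal{J}_0/(2\delta)}$.

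\textbf{Weak convergence.} I focus on $t \to +\infty$, the case $-\infty$ being symmetric. Given any sequence $t_n \to +\infty$, define the translates $V_n(s) := V(t_n + s)$. Since $\int_\R (\|V'\|^2 + \mathcal{W}(V))\, \dd t$ is finite, on every window $[-N, N]$ both $\int_{-N}^N \|V_n'\|^2\, \dd s$ and $\int_{-N}^N \mathcal{W}(V_n(s))\, \dd s$ tend to $0$ as $n \to \infty$. Applying Cauchy--Schwarz to $V_n(s) - V_n(0) = \int_0^s V_n'$ then gives $\|V_n(s) - V_n(0)\| \to 0$ uniformly for $|s| \leq N$. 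Since $\|V_n(0)\| = \|V(t_n)\| \leq M$, I extract a subsequence with $V_n(0) \rightharpoonup w$ in $\mathcal{H}$, so that $V_n(s) \rightharpoonup w$ for every $s \in \R$. Weak lower semicontinuity of $\mathcal{W}$ combined with Fatou's lemma then yields
\[
2N\, \mathcal{W}(w) \leq \int_{-N}^N \liminf_n \mathcal{W}(V_n(s))\, \dd s \leq \liminf_n \int_{-N}^N \mathcal{W}(V_n(s))\, \dd s = 0,
\]
whence $\mathcal{W}(w) = 0$ and so $w \in \{e^-, e^+\}$ by \eqref{doublew}. For $n$ large enough that $t_n \geq t_V^+$, the constraint defining $\mathcal{A}$ reads $\langle V_n(0) - e^-, \nn\rangle \geq l_0/4$; passing to the weak limit forces $\langle w - e^-, \nn\rangle \geq l_0/4 > 0$, ruling out $w = e^-$. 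Thus every sequence $t_n \to +\infty$ admits a subsequence with $V(t_n) \rightharpoonup e^+$, and since the weak topology is Hausdorff, the standard subsequence principle applied to each real-valued sequence $\langle V(t_n), v\rangle$ yields $V(t) \rightharpoonup e^+$ as $t \to +\infty$.

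\textbf{Main obstacle.} The difficulty is the lack of Rellich-type compactness in infinite dimensions, which prevents a direct extraction of strongly convergent subsequences of $V_n$. The argument circumvents this by exploiting $\|V_n'\|_{L^2_{\mathrm{loc}}} \to 0$: the limiting profile is forced to be constant, so weak convergence of $V_n(s)$ for every $s$ reduces to weak convergence of the single vector $V_n(0)$. Distinguishing between the two possible limits $e^\pm$ then hinges essentially on the asymmetric inequality built into the definition of $\mathcal{A}$.
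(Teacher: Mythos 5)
Your bounds for the H\"older constant $M'=\sqrt{2\mathcal J_0}$ and for the uniform bound $M$ follow essentially the same lines as the paper's proof and are correct. For the weak convergence $V(t)\rightharpoonup e^\pm$, however, you take a genuinely different route. The paper argues by contradiction: if $V(t_k)$ escaped a basic weak neighbourhood of $e^+$ along a sequence $t_k\to\infty$, the H\"older estimate would confine the orbit, on disjoint intervals of fixed length centred at $t_k$, to a weakly compact set $K_\delta\subset B_M$ that excludes both $e^-$ (by the constraint $\langle v-e^-,\nn\rangle\geq l_0/4$ built into $\mathcal A$) and $e^+$ (by the escape condition); weak lower semicontinuity of $\mathcal W$ then gives $\inf_{K_\delta}\mathcal W>0$, forcing $\mathcal J_\R(V)=\infty$, a contradiction. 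Your proof instead takes an arbitrary sequence $t_n\to\infty$, translates to $V_n(s):=V(t_n+s)$, observes that $\int_{-N}^N\|V_n'\|^2$ and $\int_{-N}^N\mathcal W(V_n)$ both vanish as $n\to\infty$ (being tails of convergent integrals), and uses the H\"older estimate to force $V_n(s)-V_n(0)\to 0$ strongly and uniformly in $s\in[-N,N]$; hence any weak subsequential limit $w$ of $V_n(0)$ is also the pointwise weak limit of $V_n(s)$, weak lower semicontinuity plus Fatou give $\mathcal W(w)=0$, the constraint in $\mathcal A$ rules out $w=e^-$, and the subsequence principle applied to the scalars $\langle V(t),v\rangle$ concludes. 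The key observation that makes your route work is integrating $\mathcal W(V_n)$ over windows rather than evaluating at points, since $\int_\R \mathcal W(V)<\infty$ does not yield $\mathcal W(V(t_n))\to 0$ for arbitrary $t_n\to\infty$. Both arguments rest on the same three ingredients -- the H\"older estimate, weak sequential compactness of closed balls in $\mathcal H$, and weak lower semicontinuity of $\mathcal W$ combined with the constraint in $\mathcal A$ -- but your translate-and-extract strategy sidesteps the explicit construction of the weakly compact set $K_\delta$ and identifies the limit directly rather than ruling out nonconvergence; it is closer in spirit to concentration-compactness arguments. The paper's argument has the advantage of reading off directly as a statement about weak neighbourhoods, while yours is more sequential but arguably more transparent.
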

\begin{proof}
It is clear that for every $t_1<t_2$, and every $V\in\mathcal{A}_b$, we have
\begin{equation*}
\|V(t_2)-V(t_1)\|\leq \int_{t_1}^{t_2}\|V'(s)\|\dd s\leq \Big|\int_{t_1}^{t_2} \|V'(s)\|^2\dd s\Big|^{1/2}|t_2-t_1|^{1/2}\leq M'|t_2-t_1|^{1/2},
\end{equation*}
with $M'=\sqrt{2\mathcal J_0}$.
Next, in view of \eqref{liminfh}, $\|v\|\geq R$ implies that $\mathcal W(v)\geq m$ for some constant $m>0$, and $R > 0$ sufficiently large. Thus, for every $V\in\mathcal{A}_b$, we have $$m\mathcal L^1(\{ t\in \R: \|V(t)\|\geq R\})\leq \int_{\R}\mathcal W(V(t))\dd t\leq\mathcal J_0,$$ where $\mathcal L^1$ stands for the one dimensional Lebesgue measure. Assuming that $\|V(t)\|> R$, for some $t\in\R$, it follows that there exists $t_0<t$ such that $\|V(t_0)\|= R$, and $\|V(s)\|\geq R$, $\forall s\in [t_0,t]$. According to what precedes we can see that $m(t-t_0)\leq  \mathcal J_0$. Hence we deduce that
$\|V(t)-V(t_0)\|\leq M'(t-t_0)^{1/2}\leq \sqrt{2/m}\,\mathcal J_0$,
and $\|V(t)\|\leq R+\sqrt{2/m}\,\mathcal J_0=:M$.

Now, we recall that the ball $B_M:=\{v\in \mathcal H: \|v\| \leq M\}$ is compact for the weak topology. Let $\mathcal V=\{v\in\mathcal H: \langle f_j,v-e^+\rangle<2\delta, \forall j=1,\ldots, N\}$ (with $\delta>0$ and $f_j\in\mathcal H\setminus\{0\}$) be a neighbourhood of $e^+$ for the weak topology.
If we assume by contradiction the existence of a sequence $t_k$ such that $\lim_{k\to\infty} t_k=\infty$, and $V(t_k) \notin \mathcal V$ (i.e. $\langle f_{j_k},V(t_k)-e^+\rangle\geq 2\delta$ for some $j_k \in\{1,\ldots,N\}$), we get
$$\langle f_{j_k},V(t)-e^+\rangle \geq \langle f_{j_k},V(t)-V(t_k)\rangle+2\delta \geq  2\delta-M'\|f_{j_k}\| |t-t_k|^{1/2}\geq \delta,$$
provided that $|t-t_k|\leq\eta:=\min_{1\leq j\leq N}(\delta/M'\|f_j\|)^2$. Next, let $\mu$ be the infimum of $\mathcal W$ on the set
$$K_\delta:=\{ v\in B_M:    \langle v-e^-,\nn \rangle \geq l_0/4, \text{ and }  \langle f_j,v-e^+\rangle\geq \delta \text{ for some } j \in\{1,\ldots,N\}\},$$ which is compact for the weak topology. The weakly lower semicontinuity of $\mathcal W$ and \eqref{doublew}, imply that $\mu>0$, thus according to what precedes we have $\mathcal W(V(t))\geq \mu$, $\forall t\in [t_k-\eta,t_k+\eta]$, with $t_k\geq t_V^++\eta$. Finally, since the intervals $[t_k-\eta,t_k+\eta]$ may be assumed to be disjoint, we obtain $\mathcal J_\R(V)=\infty$, which is a contradiction. This establishes that $V(t)\rightharpoonup e^+$, as $t\to\infty$. Similarly we can prove that  $V(t)\rightharpoonup e^-$, as $t\to-\infty$.
\end{proof}

\begin{lemma}\label{lem1new}
Given a sequence $\{V_k\}\subset\mathcal A_b$, there exist a sequence $\{x_k\}\subset\R$, and a map $U\in\mathcal A_b$, such that $\mathcal J_\R(U)\leq \liminf_{k\to\infty} \mathcal J_\R(V_k)$, and up to subsequence the maps $\bar V_k(t):=V_k(t-x_k)$ satisfy
\begin{itemize}
\item[(i)] $\forall t\in\R$: $\bar V_k(t)\rightharpoonup U(t)$, as $k\to\infty$,
\item[(ii)] $\bar V'_k\rightharpoonup U'$ in $L^2(\R,\mathcal H)$, as $k\to\infty$.
\end{itemize}
\end{lemma}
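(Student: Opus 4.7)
The plan is to translate the orbits into a common frame, pass to a weakly convergent subsequence via Hilbert-space compactness, and verify all properties via weak lower semicontinuity. First, I would choose the translations: set $-x_k$ to be the infimum of $\{t\in\R:\langle V_k(t)-e^-,\nn\rangle\geq l_0/2\}$, which is finite because each $V_k\in\mathcal A_b$ satisfies $V_k(t)\rightharpoonup e^\pm$ as $t\to\pm\infty$ by Lemma~\ref{lem1}, so that $\bar V_k(t):=V_k(t-x_k)$ satisfies $\langle\bar V_k(0)-e^-,\nn\rangle=l_0/2$ and $\langle\bar V_k(t)-e^-,\nn\rangle<l_0/2$ for all $t<0$. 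By translation invariance, the conclusions of Lemma~\ref{lem1} transfer uniformly: $\|\bar V_k(t)\|\leq M$, $\|\bar V_k(t_2)-\bar V_k(t_1)\|\leq M'|t_2-t_1|^{1/2}$, and $\|\bar V'_k\|_{L^2(\R,\mathcal H)}^2\leq 2\mathcal J_0$.

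Second, weak sequential compactness of the closed ball $B_M\subset\mathcal H$ combined with a diagonal extraction on a countable dense subset of $\R$ (working inside the separable closed span of the relevant vectors if $\mathcal H$ is not itself separable) produces a subsequence with $\bar V_k(q)\rightharpoonup U(q)$ for every rational $q$. Weak lower semicontinuity of $\|\cdot\|$ passes the equi-Hölder estimate to the weak limits, so $U$ extends uniquely and Hölder-$\tfrac{1}{2}$-continuously to all of $\R$, and $\bar V_k(t)\rightharpoonup U(t)$ for every $t\in\R$, proving (i). A further extraction yields $\bar V'_k\rightharpoonup W$ weakly in $L^2(\R,\mathcal H)$; testing $\int\langle\bar V_k,h\rangle\phi'\,\mathrm dt=-\int\langle\bar V'_k,h\rangle\phi\,\mathrm dt$ against $\phi\in C_c^1(\R)$ and $h\in\mathcal H$, dominated convergence on compact supports identifies $W=U'$ distributionally, proving (ii). The energy inequality $\mathcal J_\R(U)\leq\liminf_k\mathcal J_\R(V_k)$ then follows from weak lsc of the $L^2$-norm for the kinetic part, and from weak lsc of $\mathcal W$ combined with Fatou's lemma for the potential part.

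The main obstacle is verifying $U\in\mathcal A$. The left constraint is immediate since weak limits preserve the value of the continuous linear functional $\langle\cdot-e^-,\nn\rangle$: passing $\langle\bar V_k(t)-e^-,\nn\rangle<l_0/2$ to the weak limit yields $\langle U(t)-e^-,\nn\rangle\leq 3l_0/4$ for all $t\leq 0$, so $t_U^-=0$ suffices. For the right constraint the translations $x_k$ do not \emph{a priori} control the ``exit time'' of $\bar V_k$ into the right half, so I would apply the argument of Lemma~\ref{lem1} directly to the limit $U$. On the weakly compact set $K=\{v\in B_M:\langle v-e^-,\nn\rangle\in[l_0/8,7l_0/8]\}$, which avoids both $e^\pm$, weak lsc of $\mathcal W$ gives $\mu:=\inf_K\mathcal W>0$; finite action $\mathcal J_\R(U)<\infty$ then caps the Lebesgue measure of $\{t:U(t)\in K\}$ by $\mathcal J_0/\mu$, while the Hölder-$\tfrac{1}{2}$ bound forces every crossing of $K$ by $U$ to consume at least a fixed amount of time, hence only finitely many such crossings. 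Combined with $\langle U(0)-e^-,\nn\rangle=l_0/2$, this forces $U(t)$ to weakly converge at $+\infty$ to a single zero of $\mathcal W$; ruling out the ``half-loop'' scenario $U(t)\rightharpoonup e^-$ (incompatible with the right-constraint inheritance from the $\bar V_k$'s through weak limits along sub-subsequences) forces $U(t)\rightharpoonup e^+$, producing $t_U^+$ with $\langle U(t)-e^-,\nn\rangle\geq l_0/4$ for $t\geq t_U^+$. Together with $\mathcal J_\R(U)\leq\mathcal J_0$, this yields $U\in\mathcal A_b$.
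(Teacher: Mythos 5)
Your translation choice is where the argument breaks. You set $-x_k=\inf\{t:\langle V_k(t)-e^-,\nn\rangle\geq l_0/2\}$ (the \emph{first} hitting time of the level $l_0/2$), and that is not enough to guarantee $U\in\mathcal A$. Concretely, consider $V_k$ that crosses $l_0/2$ at time $0$, then dips back near $e^-$ and stays there for all $t\in[1,k]$, and only afterward completes its final crossing to the $e^+$ side. Such a sequence lies in $\mathcal A_b$ for a fixed $\mathcal J_0$: the extra pair of transitions costs a bounded amount of action independently of how long the dwell near $e^-$ lasts (each full crossing of the slab $l_0/4\le\langle v-e^-,\nn\rangle\le 3l_0/4$ costs at least $\sqrt{\mathcal W_0/2}\,l_0$, while time spent near $e^-$ is essentially free). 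With your translation $\bar V_k=V_k$, and for every fixed $t>1$ the weak limit $U(t)$ lands near $e^-$; in fact $U(t)\rightharpoonup e^-$ as $t\to+\infty$, so $U$ violates the right constraint and $U\notin\mathcal A$. Your parenthetical ``ruling out the half-loop scenario \ldots incompatible with the right-constraint inheritance from the $\bar V_k$'s through weak limits along sub-subsequences'' is exactly the assertion that needs proof, and it is false for this choice of translation: the times $t_{V_k}^+$ go to $+\infty$, so nothing is inherited by the pointwise-in-$t$ weak limit. Similarly, knowing $\langle U(0)-e^-,\nn\rangle=l_0/2$, finite action, and finitely many crossings is consistent with $U$ making a single excursion and falling back to $e^-$.

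This is precisely the difficulty the paper's proof is organized around. The paper records \emph{all} crossings $x_1(k)<\cdots<x_{2N_k}(k)$ of the slab, shows $N_k$ is uniformly bounded by the same per-crossing energy estimate, and then runs a small combinatorial induction (borrowed from \cite[Lemma 2.4.]{ps}) to pick indices $i_0\le j_0$ for which (a) $x_{2j_0-1}(k)-x_{2i_0-1}(k)$ stays bounded, (b) the preceding dwell $x_{2i_0-1}(k)-x_{2i_0-2}(k)\to\infty$, and (c) the following dwell $x_{2j_0}(k)-x_{2j_0-1}(k)\to\infty$. Centering at $x_{2i_0-1}(k)$ is what forces the weak limit to satisfy \emph{both} constraint halves: (b) gives the left constraint for $t\le0$, while (a) and (c) together give the right constraint for $t$ past a finite time. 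Your proof needs an analogous selection step; without it the conclusion $U\in\mathcal A_b$ does not follow. The remaining ingredients of your write-up (weak compactness of $B_M$, equi-H\"older transfer, identification of $U'$ through distributional pairing, Fatou plus weak lower semicontinuity for the energy inequality, and the argument passing the left constraint through the linear functional $\langle\cdot-e^-,\nn\rangle$) are sound and match the paper's treatment.
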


\begin{proof}
By extracting if necessary a subsequence we may assume that $ \mathcal J_\R(V_k)$ converges to $\liminf_{k\to\infty} \mathcal J_\R(V_k)$, as $k\to \infty$.
For every $k$ we define the sequence $$-\infty<x_1(k)<x_2(k)<\ldots<x_{2N_k-1}(k)<x_{2N_k}(k)=\infty$$
by induction:
\begin{itemize}
\item $x_1(k)=\sup\{ t\in\R: \, \langle V_k(s)-e^-,\nn\rangle\leq 3l_0/4, \forall s\leq t\}<\infty$,
\item $x_{2i}(k)=\sup\{ t\in\R: \, \langle V_k(s)-e^-,\nn\rangle\geq l_0/4, \forall s\in [x_{2i-1}(k),t]\}\leq\infty$,
\item $x_{2i+1}(k)=\sup\{ t\in\R: \, \langle V_k(s)-e^-,\nn\rangle\leq 3l_0/4, \forall s\in [x_{2i}(k),t]\}<\infty$, if $x_{2i}(k)<\infty$,
\end{itemize}
where $i=1,\ldots,N_k$.
In addition, we set
\begin{itemize}
\item $y_{2i-1}(k)=\sup\{ t\leq x_{2i-1}(k): \, \langle V_k(t)-e^-,\nn\rangle\leq l_0/4\}$,
\item $y_{2i}(k)=\sup\{ t\leq x_{2i}(k): \, \langle V_k(t)-e^-,\nn\rangle\geq 3l_0/4\}$, if $x_{2i}(k)<\infty$.
\end{itemize}

\begin{figure}[h]
\begin{center}
\includegraphics[scale=1]{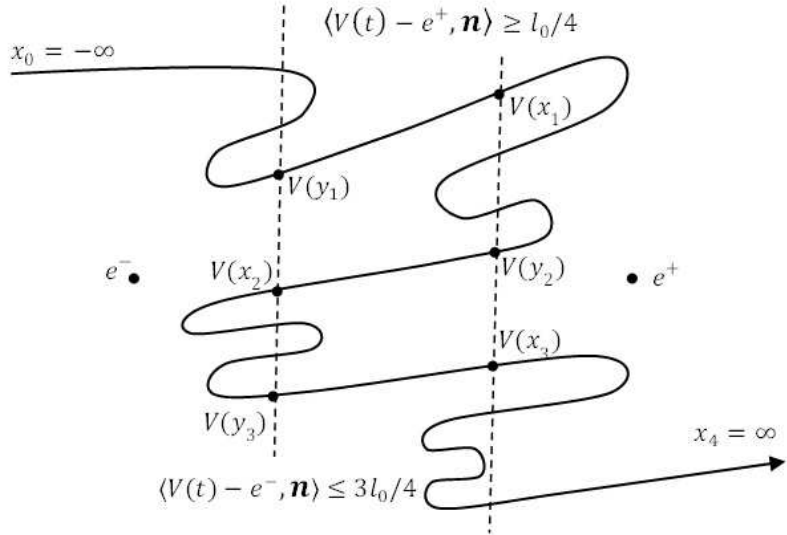}
\end{center}
\caption{The sequence $-\infty=x_0<y_1<x_1\leq y_2<x_2<\ldots<x_{2N}=\infty$, ($N=2$).}
\label{fig}
\end{figure}

Next, we notice that the set $K:=\{ v\in\mathcal H: \|v\| \leq M, l_0/4\leq\langle v-e^-,\nn\rangle\leq 3l_0/4 \}$ is compact for the weak topology. As a consequence of \eqref{doublew} and the lower semicontinuity of $\mathcal W$, we have $\mathcal W_0:=\min_{v\in K}\mathcal W(v)=\mathcal W(v_0)$, for some $v_0\in K$, thus $\mathcal W_0>0$. Finally, since
$$\mathcal J_{[y_j(k),x_j(k)]}(V_k)\geq \int_{y_j(k)}^{x_j(k)}\sqrt{2\mathcal W(V_k(t))}\|V'_k(t)\|\dd t\geq \sqrt{\mathcal W_0/2}\,l_0,$$
holds for every $k\geq 1$ and $j=1,\ldots, 2N_k-1$, we can see that $(2N_k-1)\sqrt{\mathcal W_0/2}\,l_0\leq \mathcal J_0$, i.e.
the integers $N_k$ are uniformly bounded. By passing to a subsequence, we may assume that $N_k$ is a constant integer $N\geq 1$.

Our next claim (cf. \cite[Lemma 2.4.]{ps}) is that up to subsequence, there exist an integer $i_0$ ($1\leq i_0\leq N$) and an integer $j_0$ ($i_0\leq j_0\leq N$) such that
\begin{itemize}
\item[(a)] the sequence $x_{2j_0-1}(k)-x_{2i_0-1}(k)$ is bounded,
\item[(b)] $\lim_{k\to\infty}(x_{2i_0-1}(k)-x_{2i_0-2}(k))=\infty$,
\item[(c)] $\lim_{k\to\infty}(x_{2j_0}(k)-x_{2j_0-1}(k))=\infty$,
\end{itemize}
where for convenience we have set $x_0(k):=-\infty$.

Indeed, we are going to prove by induction on $N\geq 1$, that given $2N+1$ sequences $-\infty\leq x_0(k)<x_1(k)<\ldots<x_{2N}(k)\leq\infty$, such that $\lim_{k\to\infty}(x_1(k)-x_0(k))=\infty$, and $\lim_{k\to\infty}(x_{2N}(k)-x_{2N-1}(k))=\infty$, then up to subsequence the properties (a), (b), and (c) above hold, for two fixed indices $1\leq i_0\leq j_0\leq N$.
When $N=1$, the assumption holds by taking $i_0=j_0=1$. Assume now that $N>1$, and let $l \geq 1$ be the largest integer such that
the sequence $x_{l}(k)-x_{1}(k)$ is bounded. Note that $l<2N$. If $l$ is odd, we are done, since the sequence $x_{l+1}(k)-x_{l}(k)$ is unbounded, and thus we can extract a subsequence $\{n_k\}$ such that $\lim_{k\to\infty}(x_{l+1}(n_k)-x_{l}(n_k))=\infty$. Otherwise $l=2m$ (with $1\leq m<N$), and the sequence $x_{2m+1}(k)-x_{2m}(k)$ is unbounded. We extract a subsequence $\{n_k\}$ such that $\lim_{k\to\infty}(x_{2m+1}(n_k)-x_{2m}(n_k))=\infty$.
Then, we apply the inductive statement with $N'=N-m$, to the $2N'+1$ sequences $x_{2m}(n_k)<x_{2m+1}(n_k)<\ldots<x_{2N}(n_k)$.

At this stage, we consider appropriate translations of the sequence $\{V_k\}$, by setting $\bar V_k(t)=V_k(t-x_{2i_0-1}(k))$. Since $\{\bar V'_k\}$ is uniformly bounded in $L^2(\R,\mathcal H)$, it follows that up to subsequence
$\bar V'_k\rightharpoonup V$ in $L^2(\R,\mathcal H)$, and  moreover
\begin{equation}\label{la1}
\int_{\R}\| V\|^2\leq\liminf_{k\to\infty} \int_{\R}\|\bar V'_k\|^2.
\end{equation}
On the other hand, we write $\bar V_k(t)=\bar V_k(0)+\int_0^t\bar V'_k(s)\dd s$,
and notice that up to subsequence $\bar V_k(0)\rightharpoonup u_0$ in $\mathcal H$, since $\|\bar V_k(0)\|\leq M$ (cf. Lemma \ref{lem1}). Our claim is that
$U(t):=u_0+\int_0^tV(s)\dd s$ has all the desired properties. Indeed, since $\int_0^t\bar V'_k(s)\dd s\rightharpoonup  \int_0^t V(s)\dd s$ holds in $\mathcal H$ for every $t\in\R$, we also have $\bar V_k(t)\rightharpoonup U(t)$ for every $t\in\R$. In view of the weakly lower semicontinuity of $\mathcal W$, this implies that $\liminf_{k\to\infty}\mathcal W(\bar V_k(t))\geq \mathcal W(U(t))$ for every $t\in\R$, thus by Fatou's Lemma we obtain
\begin{equation}\label{la2}
\int_{\R}\mathcal W(U(t))\dd t\leq\liminf_{k\to\infty} \int_{\R}\mathcal W(\bar V_k(t))\dd t.
\end{equation}
Combining \eqref{la1}\ with \eqref{la2} it is clear that $\mathcal J_\R(U)\leq\liminf_{k\to\infty}\mathcal J_{\R}(V_k)$. To conclude it remains to show that $U\in\mathcal A$. In view of the above property (b) it follows that $\langle U(t)-e^-,\nn\rangle\leq 3l_0/4$, for every $t\leq 0$. Similarly, in view of (a) and (c), we have $\langle U(t)-e^-,\nn\rangle\geq l_0/4$, for $t\geq T>0$ large enough.
\end{proof}

Applying Lemma \ref{lem1new} to a minimizing sequence i.e. $\{V_k\}\subset\mathcal A_b$ such that $$\lim_{k\to\infty}\mathcal J_\R(V_k)=\inf_{V\in\mathcal{A}_b}\mathcal J_{\R}(V),$$ we immediately obtain the existence of the minimizer $U$.
To show that the minimizer $U$ satisfies the equipartition property (ii) we are going to check that
\begin{equation}\label{check1}
0=\int_\R \Big(\frac{1}{2}\|U'(t)\|^2-\mathcal W(U(t))\Big)\phi(t)\dd t,\  \forall \phi \in C^\infty_0(\R;\R).
\end{equation}
Actually, since every $\phi \in C^\infty_0(\R;\R)$ is the uniform limit of step functions, we just need to prove that
\begin{equation}\label{check1}
\int_a^b \frac{1}{2}\|U'(t)\|^2=\int_a^b\mathcal W(U(t))\dd t, \ \forall [a,b]\subset \R.
\end{equation}
For every $\kappa>0$, let
\begin{equation*}
V_\kappa(t)=\begin{cases}
U(t),&\text{ for } t\leq a,\\
U(a+\frac{t-a}{\kappa}),&\text{ for } t\in [a, a+\kappa( b-a)],\\
U(t+(1-\kappa)(b-a)),&\text{ for } t\geq a+\kappa (b-a).
\end{cases}
 \end{equation*}
It is easy to see that $V_\kappa\in \mathcal A$ and,
\begin{equation}\label{compk}
\mathcal J_\R(V_\kappa)-\mathcal J_\R(U)=\int_a^b \frac{(1-\kappa)}{2\kappa}\|U'(t)\|^2+(\kappa-1)\int_a^b\mathcal W(U(t))\dd t.
\end{equation}
Since $\mathcal J_\R(V_\kappa)-\mathcal J_\R(U)\geq 0$ by the minimality of $U$, letting $\kappa \to 1^+$ and $\kappa \to 1^-$ in \eqref{compk}, we obtain \eqref{check1}.

Finally we assume that $\mathcal W\in C^1(\mathcal H;\R)$. Given $\xi   \in C^\infty_0(\mathcal H;\R)$, and $\lambda\in\R$, we compute
$$\frac{\dd}{\dd \lambda}\Big|_{\lambda=0}\mathcal J_\R(U+\lambda \xi)=\int_\R\big[\langle U'(t),\xi'(t)\rangle+\langle\nabla\mathcal W(U(t)),\xi(t)\rangle\big]\dd t.$$
By the minimality of $U$, we have $\mathcal J_\R(U+\lambda \xi)-\mathcal J_\R(U)\geq 0$, hence
$$\int_\R\big[\langle U'(t),\xi'(t)\rangle+\langle\nabla\mathcal W(U(t)),\xi(t)\rangle\big]\dd t=0.$$
This implies that the derivative of
$t\mapsto U'(t)$ in
$\mathcal D'(\R;\mathcal H)$
is $t\mapsto \nabla\mathcal W(U(t))$
and that $U\in C^2(\R;\mathcal H)$ is a classical solution of \eqref{odeh}.

\section{Properties of the effective potential $\mathcal W$ and of the set of minimal heteroclinics $F$}

We establish below some properties of the effective potential $\mathcal W$ defined in subsection \ref{ssec:appl1}, assuming that the function $W$ satisfies \eqref{w13}:

\begin{lemma}\label{lem1w}
\begin{itemize}
\item[(i)] The potential $\mathcal W$ is sequentially weakly lower semicontinuous.
\item[(ii)] Let $\{u_k\}\subset \mathcal H$ be such that $\lim_{k\to\infty}\mathcal W(u_k)=0$.
Then, there exist a sequence $\{x_k\}\subset\R$, and $e\in F$, such that (up to subsequence) the maps $\bar u_k(x):=u_k(x-x_k)$ satisfy $\lim_{k\to\infty}\|\bar u_k-e\|_{H^{1}(\R;\R^m)}=0$.
As a consequence, $d_{\mathcal{\tilde H}}(u,F)\to 0$, as $\mathcal W(u)\to0$, and for every $c_1>0$, there exists $c_2>0$ such that $d_{\mathcal{H}}(u,F)\geq c_1 \text{(resp. $d_{\mathcal{\tilde H}}(u,F)\geq c_1$)} \Rightarrow \mathcal W(u)\geq c_2$.
\item[(iii)] $\mathcal W$ restricted to $\mathcal{\tilde H}$ is a $C^1(\mathcal{\tilde H};[0,\infty))$ smooth function, and $D\mathcal W(u)h=\int_\R [u'\cdot h'+ \nabla W(u)\cdot h]$, $\forall u\in\mathcal{\tilde H}$, $\forall h\in H^1(\R;\R^m)$.
\end{itemize}
\end{lemma}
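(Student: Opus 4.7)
The plan is to take $u_k \rightharpoonup u$ in $\mathcal H$ and show $\mathcal W(u) \leq \liminf \mathcal W(u_k)$. I may assume the liminf is finite and pass to a subsequence realizing it, so $u_k' \in L^2(\R;\R^m)$ with $\|u_k'\|_{L^2}$ bounded. Then $u_k - e_0$ is bounded in $H^1(\R;\R^m)$ and converges weakly in $L^2$ to $u - e_0$, so Rellich--Kondrachov on bounded intervals plus a diagonal extraction gives $u_k \to u$ in $L^2_{\mathrm{loc}}$ and a.e. Fatou applied to the nonnegative integrand $W(u_k) \to W(u)$, together with weak $L^2$ lower semicontinuity of $\int |u_k'|^2$, yields $J_\R(u) \leq \liminf J_\R(u_k)$, which is (i).

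\textbf{Part (ii).} The hypothesis $\mathcal W(u_k) \to 0$ makes $u_k$ a minimizing sequence for $J_\R$ in $A$. I would center each $u_k$ by a translation $x_k$ so that the centered map $\bar u_k(x) := u_k(x - x_k)$ crosses a fixed level set (e.g.\ $\{(u - a^-) \cdot (a^+ - a^-) = \tfrac12 |a^+ - a^-|^2\}$) at $x=0$. Uniform energy bounds then give $\bar u_k$ bounded in $H^1_{\mathrm{loc}}$, so up to subsequence $\bar u_k$ converges locally uniformly and weakly in $H^1_{\mathrm{loc}}$ to some $e$. By (i), $J_\R(e) \leq J_{\min}$, so $e \in F$. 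Equality $J_\R(\bar u_k) \to J_\R(e)$ forces $\|\bar u_k'\|_{L^2} \to \|e'\|_{L^2}$ and $\int W(\bar u_k) \to \int W(e)$; combined with the weak $L^2$ convergence this upgrades $\bar u_k' \to e'$ to strong $L^2$ convergence, while tail control from the exponential decay estimate \eqref{expest} for $e$ together with the energy convergence yields $\bar u_k \to e$ strongly in $H^1(\R;\R^m)$. The two quantitative consequences follow by contradiction: if $\mathcal W(u_k) \to 0$ but $d_{\mathcal{\tilde H}}(u_k, F)$ (resp.\ $d_{\mathcal H}(u_k, F)$) stays bounded below along a subsequence, then the translates $e(\cdot + x_k) \in F$ built above satisfy $d_{\mathcal{\tilde H}}(u_k, e(\cdot + x_k)) = \|\bar u_k - e\|_{H^1} \to 0$, a contradiction.

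\textbf{Part (iii).} For $u \in \mathcal{\tilde H}$, the 1D embedding $H^1(\R) \hookrightarrow L^\infty \cap C_0$ shows $u \in L^\infty$ with $u(x) \to a^\pm$ as $x \to \pm\infty$, and $u - a^\pm \in L^2$ near $\pm\infty$ since $e_0 \equiv a^\pm$ there. Because $\nabla W(a^\pm) = 0$ and $W \in C^{2,\alpha}$, this gives $|\nabla W(u(x))| \leq C|u(x) - a^\pm|$ and $W(u(x)) \leq C|u(x) - a^\pm|^2$ near $\pm\infty$, so $\nabla W(u) \in L^2(\R;\R^m)$ and $W(u) \in L^1(\R)$, making $\mathcal W(u)$ finite and the proposed linear functional bounded on $H^1$. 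Expanding
\[
\mathcal W(u+h) - \mathcal W(u) = \int_\R \bigl(u' \cdot h' + \nabla W(u) \cdot h\bigr) + \tfrac12 \int_\R |h'|^2 + \int_\R \bigl(W(u+h) - W(u) - \nabla W(u) \cdot h\bigr),
\]
the quadratic remainder is controlled pointwise by $\tfrac12 \sup_{0 \leq s \leq 1} \|D^2 W(u + sh)\|\,|h|^2$, which is $O(\|h\|_{H^1}^2)$ on small $H^1$-balls since $u, u+h$ remain $L^\infty$-bounded. Continuity of $D\mathcal W$ follows from $u_n \to u$ in $\mathcal{\tilde H}$ implying $u_n' \to u'$ in $L^2$ and $u_n \to u$ in $L^\infty$, combined with local Lipschitzness of $\nabla W$.

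\textbf{Main obstacle.} The heart of the lemma is the global-in-$\R$ strong $H^1$ compactness for minimizing sequences in (ii): local compactness on bounded intervals is immediate from Rellich, but ruling out loss of mass at $\pm\infty$ requires the exponential decay bound \eqref{expest} on minimal heteroclinics to make the tail integrals $\int_{|x|>R}[\tfrac12|e'|^2 + W(e)]$ uniformly small, thereby transferring local convergence to global. The remaining arguments in (i) and (iii) are standard semicontinuity and calculus-of-variations computations once this compactness is in hand.
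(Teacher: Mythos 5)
Parts (i) and (iii) of your proposal follow essentially the same route as the paper's proof. Part (ii) has the right skeleton (translate, extract a limit $e$, identify $e\in F$ by minimality, upgrade to strong $H^1(\R)$ convergence), but there is a genuine gap in the tail control, and the mechanism you invoke to fill it is not the correct one.

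Your deduction that $\bar u_k'\to e'$ strongly in $L^2(\R)$ from $\|\bar u_k'\|_{L^2}\to\|e'\|_{L^2}$ together with weak convergence is sound. The problem is $\|\bar u_k-e\|_{L^2(\R)}$. From $\int_\R W(\bar u_k)\to\int_\R W(e)$ and local uniform convergence you do obtain that $\int_{|x|>R}W(\bar u_k)$ is small for $R$ large and then $k$ large. But to convert that into smallness of $\int_{|x|>R}|\bar u_k-a^\pm|^2$ you must use the quadratic lower bound \eqref{convex1}, which is only valid where $|\bar u_k-a^\pm|\le r$. A priori $\bar u_k$ could make excursions out of $B_r(a^\pm)$ in the tails; there $W$ is merely bounded away from zero, so smallness of $\int W(\bar u_k)$ controls only the Lebesgue measure of the bad set, not the $L^2$ norm of $\bar u_k-a^\pm$. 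The exponential decay of $e$ (which you cite) makes the tails of $e$ small, but says nothing about the tails of $\bar u_k$; it is a necessary ingredient but not the one that rules out the excursions. The paper rules them out via the action cost estimates \eqref{bound4} and \eqref{bound5}: any subarc running from $\partial B_\epsilon(a^\pm)$ to $\partial B_r(a^\pm)$ costs at least $\sqrt c\,(r-\epsilon)\epsilon$, while, by \eqref{bound4} applied on the central transition interval together with $\mathcal W(\bar u_k)\to0$, the residual action budget available in the tails is $O(\epsilon^2)$; for $\epsilon$ small these are incompatible, which forces $|\bar u_k(x)-a^\pm|\le r$ for $|x|$ large and $k$ large. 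Only after this confinement is established do \eqref{convex1}--\eqref{convex2} close the $L^2$ estimate and, via the identity \eqref{formula}, the $H^1$ estimate. As a side remark, the paper organizes the derivative estimate through the algebraic identity \eqref{formula} rather than your norm-plus-weak-convergence split, but both are valid; the missing piece in your argument is precisely the confinement step, not the derivative step.
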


\begin{proof}
(i) Let $\{u_k\}\subset \mathcal H$ be such that $u_k \rightharpoonup u$ in $\mathcal H$ (i.e.
$u_k -u\rightharpoonup 0$ in $L^2(\R;\R^m)$), and let us assume that
$l=\liminf_{k\to \infty}\mathcal W(u_k)<\infty$ (since otherwise the statement is trivial).
By extracting a subsequence we may assume that $\lim_{k\to \infty}\mathcal W(u_k)=l$. In view of Lemma \ref{lem1} (applied in the finite dimensional case with $W$ instead of $\mathcal W$), the sequence $\{u_k\}$ is equicontinuous and uniformly bounded. Thus, the theorem of Ascoli implies that $u_k \to \tilde u$ in $C_{\mathrm{loc}}(\R;\R^m)$, as $k\to\infty$ (up to subsequence).
On the other hand, since $\|u'_k\|_{L^2(\R;\R^m)}$ is bounded, we have that $u'_k \rightharpoonup v$, in $L^2(\R;\R^m)$ (up to subsequence). In addition, one can easily see that  $u=\tilde u\in H_{\rm loc}^{1}(\R;\R^m)$, and $u'=v$. Finally, by the weakly semicontinuity of the $L^2(\R;\R^m)$ norm and Fatou's Lemma (cf. the end of the proof of Lemma \ref{lem1new}), we deduce that $\mathcal W(u)\leq l$, i.e.
$\mathcal W(u)\leq \liminf_{k\to \infty}\mathcal W(u_k)$.

(ii) We first establish that given $u\in\mathcal H$ such that $u' \in L^2(\R;\R^m)$, and $e \in F$, we have
\begin{equation}\label{formula}
\mathcal W(u)=\int_\R \Big[\frac{1}{2}|u'-e'|^2 +W(u)-W(e)-\nabla W(e)\cdot (u-e)\Big].
\end{equation}
In view of \eqref{expest}, it is clear that $e''=\nabla W(e) \in L^2(\R;\R^m)$, thus $e'\in H^{1}(\R;\R^m)$. As a consequence, we can see that $\int_\R e''\cdot(u-e)=-\int_\R e'\cdot (u'-e')$, and
\begin{align*}
\mathcal W(u)&=\int_\R \Big[\frac{1}{2}|u'|^2-\frac{1}{2}|e'|^2 +W(u)-W(e)\Big]\\
&=\int_\R \Big[\frac{1}{2}|u'|^2-\frac{1}{2}|e'|^2-e'\cdot (u'-e')  +W(u)-W(e)- e''\cdot(u-e)\Big],
\end{align*}
from which \eqref{formula} follows.

Now, we consider a sequence $\{u_k\}\subset \mathcal H$ such that $\lim_{k\to\infty}\mathcal W(u_k)=0$. According to Lemma \ref{lem1new}, there exist a sequence $\{x_k\}\subset\R$, and $e\in F$, such that (up to subsequence) the maps $\bar u_k(x):=u_k(x-x_k)$ satisfy
\begin{equation}\label{propaa}
 \lim_{k\to\infty}\bar u_k(x)= e(x), \, \forall x\in\R.
\end{equation}
Having a closer look at the proof of Lemma \ref{lem1new}, we can show that in the case of a finite dimensional space, the convergence in \eqref{propaa} actually holds in $C_{\mathrm{loc}}(\R;\R^m)$\footnote{Indeed, when $\mathcal H=\R^m$, one can apply in the proof of Lemma \ref{lem1new}  the theorem of Ascoli to the sequence $\bar V_k$, since by Lemma \ref{lem1} it is equicontinuous and uniformly bounded.}.

Our claim is that
\begin{equation}\label{claime}
\lim_{k\to\infty}\|\bar u_k-e\|_{H^{1}(\R;\R^m)}=0.
\end{equation}
According to hypothesis \eqref{w2} we have
\begin{subequations}
\begin{equation}\label{convex1}
	W(u)\geq\frac{c}{2}|u-a^\pm|^2 , \forall u: |u-a^\pm|\leq r,
	\end{equation}
\begin{equation}\label{convex2}
	W(v)-W(u)-\nabla W(u)\cdot (v-u)\geq \frac{c}{2}|v-u|^2, \forall u,v: |u-a^\pm|\leq r, \ |v-a^\pm|\leq r.
	\end{equation}
\end{subequations}
Let $\mu>0$ be such that
\begin{equation}\label{convex3}
	W(u)\leq\frac{\mu}{2}|u-a^\pm|^2 , \forall u\in\R^m: |u-a^\pm|\leq r,
	\end{equation}
let $\epsilon\in (0,r)$, and let $\nu$ be a unit vector of $\R^m$. We notice using \eqref{convex3} that the map $[0,1]\ni x\mapsto z(x)=a^\pm+\epsilon \nu x$, is such that $J_{[0,1]}(z)\leq \frac{\mu+1}{2}\epsilon^2$. As a consequence,
\begin{equation}\label{bound4}
	\inf\{J_{[\alpha,\beta]}(v): v\in H^{1}([\alpha,\beta];\R^m), |v(\alpha)-a^-|= \epsilon, |v(\beta)-a^+|=\epsilon\}\geq J_{\mathrm{min}}-(\mu+1)\epsilon^2,
	\end{equation}
since otherwise we can construct a map in $A$ whose action is less than $J_{\mathrm{min}}$. On the other hand we have
\begin{equation}\label{bound5}
	\inf\{J_{[\alpha,\beta]}(v): v\in H^{1}([\alpha,\beta];\R^m), |v(\alpha)-a^\pm|= \epsilon, |v(\beta)-a^\pm|= r\}\geq \sqrt{c}(r-\epsilon)\epsilon.
	\end{equation}
Indeed, for such a map $v$, we can check that $$J_{[\alpha,\beta]}(v)\geq \int_\alpha^\beta \sqrt{2W(v)}|v'|\geq \sqrt{c}(r-\epsilon)\epsilon.$$
Let $\epsilon_0\in(0,r)$ be such that $(\mu+2)\epsilon^2<\sqrt{c}(r-\epsilon)\epsilon$, $\forall \epsilon <\epsilon_0$. Next, for $\epsilon<\epsilon_0$ fixed, choose an interval $[\lambda^-,\lambda^+]$ such that $|e(x)-a^-|\leq \epsilon/2$, $\forall x\leq \lambda^-$, and $|e(x)-a^+|\leq \epsilon/2$, $\forall x\geq \lambda^+$. According to \eqref{propaa}, we have for $k\geq N$ large enough:
\begin{subequations}
\begin{equation}\label{bound6}
	|\bar u_k(\lambda^\pm)-a^\pm|< \epsilon,
	\end{equation}
\begin{equation}\label{bound7}
	\Big| \int_{[\lambda^-,\lambda^+]} ( W(\bar u_k)-W(e)-\nabla W(e)\cdot (\bar u_k-e))\Big|< \epsilon^2,
	\end{equation}
\begin{equation}\label{bound7b}
	\|\bar u_k-e\|_{L^2( [\lambda^-,\lambda^+];\R^m)}< \epsilon,
	\end{equation}
\begin{equation}\label{bound8}
	\mathcal W(\bar u_k)<\epsilon^2.
	\end{equation}
\end{subequations}
Then, combining \eqref{bound4} with \eqref{bound8}, one can see that
\begin{equation}\label{bound9}
	J_{\R\setminus [\lambda^-,\lambda^+]}(\bar u_k)< (\mu+2)\epsilon^2< \sqrt{c}(r-\epsilon)\epsilon.
	\end{equation}
Therefore, in view of \eqref{bound5} and \eqref{bound6}, it follows that $|\bar u_k(x)-a^-|\leq r$ (resp. $|\bar u_k(x)-a^+|\leq r$), $\forall x\leq \lambda^-$ (resp.  $\forall x\geq \lambda^+$). Furthermore, as a consequence of \eqref{convex2} we get
\begin{equation}\label{bound10}
	\int_{\R\setminus [\lambda^-,\lambda^+]}	(W(\bar u_k)-W(e)-\nabla W(e)\cdot (\bar u_k-e))\geq \frac{c}{2} \|\bar u_k-e\|_{L^2( \R\setminus[\lambda^-,\lambda^+];\R^m)}^2.
	\end{equation}
To conclude, we apply formula \eqref{formula} to $\bar u_k$, and combine \eqref{bound8} with \eqref{bound7} and \eqref{bound10}, to obtain
\begin{equation}\label{bound11}
	\|\bar u_k-e\|_{L^2( \R\setminus [\lambda^-,\lambda^+];\R^m)}< \frac{2\epsilon}{\sqrt{c}}, \text{  and  } \|\bar u'_k-e'\|_{L^2( \R;\R^m)}<2\epsilon.
	\end{equation}
 Finally, in view of \eqref{bound7b}, we have $\|\bar u_k-e\|_{L^2( \R;\R^m)}<\big(1+ \frac{2}{\sqrt{c}})\epsilon$. This establishes our claim \eqref{claime}, from which the statement (ii) of Lemma \ref{lem1w} is straightforward.

(iii) We recall that $\sigma:=\sup_{e\in F}\|e\|_{L^\infty(\R;\R^m)}<\infty$ (cf. Lemma \ref{lem1}).
Given $u\in \mathcal{\tilde H}$, set $\kappa_1:=\max(\|u\|_{L^\infty(\R;\R^m)},\sigma)$, and $\kappa_2:=\sup\{ |
D^2W(v)(\nu,\nu)|: |v|\leq 2\kappa_1, |\nu|=1\}$.
From formula \eqref{formula}, it is clear that $$\mathcal W(u)\leq \frac{1}{2}\|u'-e'\|_{L^2(\R;\R^m)}^2+\frac{\kappa_2}{2} \|u-e\|_{L^2(\R;\R^m)}^2<\infty.$$ On the other hand, one can see that
$\nabla W(u)\in L^2(\R;\R^m)$. Furthermore, when $\|h\|_{H^1(\R;\R^m)}$ is small enough, such that $\|h\|_{L^\infty(\R;\R^m)}<\kappa_1$, we have
\begin{align*}
\Big|\mathcal W(u+h)-\mathcal W(u)-\int_\R[u'\cdot h'+\nabla W(u)\cdot h]\Big|\leq \frac{1}{2}\|h'\|_{L^2(\R;\R^m)}^2+\frac{\kappa_2}{2}\|h\|_{L^2(\R;\R^m)}^2.
\end{align*}
This proves that $\mathcal W$ is differentiable at $u$, and $D\mathcal W(u)h=\int_\R[u'\cdot h'+\nabla W(u)\cdot h]$.
\end{proof}

From the arguments in the proof of Lemma \ref{lem1w}, we deduce some useful properties of the set $F$ (defined in subsection \ref{ssec:appl1}).
\begin{lemma}\label{propcon}
\begin{itemize}
\item[(i)] Let $\{e_k\}\subset F$ be bounded in $\mathcal H$, then
there exists $e\in F$, such that up to subsequence $\lim_{k\to\infty}\|e_k-e\|_{H^{1}(\R;\R^m)}=0$.
\item[(ii)] There exists a constant $\gamma>0$, such that for every $e\in F$, we can find $T \in \R$ such that setting $e^T (x)=e(x-T)$, we have $\|e^T\|_{\mathcal{\tilde H}}\leq \gamma$.
\item[(iii)] For every $v\in \mathcal H$ (resp. $v\in \mathcal{\tilde H}$), there exists $e\in F$ such that $d_{\mathcal H}(v,F)=\|v-e\|_{\mathcal H}$ (resp. $d_{\mathcal{\tilde H}}(v,F)=\|v-e\|_{\mathcal{\tilde H}})$.
\end{itemize}
\end{lemma}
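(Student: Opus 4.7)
The plan is to reduce all three parts to Lemma \ref{lem1w}(ii), exploiting that every $e \in F$ automatically lies in $\mathcal{\tilde H}$ (by \eqref{expest}, $e - \ee_0 \in H^1(\R;\R^m)$), that $\mathcal W \equiv 0$ on $F$, and that $F$ is closed under translations. Also, $H^1(\R;\R^m) \hookrightarrow C^0(\R;\R^m)$ will be repeatedly used to convert $H^1$-convergence into uniform convergence.

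For part (i), I would apply Lemma \ref{lem1w}(ii) directly to $\{e_k\}$ (trivially $\mathcal W(e_k) = 0$) to extract translations $x_k \in \R$ and $e^\ast \in F$ with, up to subsequence, $\phi_k := e_k(\cdot - x_k) \to e^\ast$ in $H^1(\R;\R^m)$. The main step, which I expect to be the key obstacle, is to show that $\{x_k\}$ is bounded; once this is established, extracting $x_k \to x^\ast$ and using $H^1$-continuity of translation gives $e_k = \phi_k(\cdot + x_k) \to e^\ast(\cdot + x^\ast) \in F$ in $H^1$. To bound $\{x_k\}$ I would argue by contradiction: if $x_k \to +\infty$, the substitution $z = y + x_k$ in $\|e_k - \ee_0\|_{L^2(\R;\R^m)}^2$ yields $\int_\R |\phi_k(z) - \ee_0(z - x_k)|^2 \dd z$. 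Choose $Z$ so large that $|e^\ast(z) - a^+| < l_0/4$ for $z \geq Z$, and take $k$ large enough that $\|\phi_k - e^\ast\|_{C^0} < l_0/4$. Then on $[Z, x_k-1]$ we have $\ee_0(z - x_k) = a^-$ and $|\phi_k(z) - a^-| \geq l_0/2$, so the integral diverges as $x_k \to \infty$, contradicting the $\mathcal H$-boundedness of $\{e_k\}$. The case $x_k \to -\infty$ is symmetric.

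For part (ii), I would argue again by contradiction. If no uniform $\gamma$ exists, pick $e_n \in F$ with $\inf_{T \in \R} \|e_n^T\|_{\tilde{\mathcal H}} > n$. Lemma \ref{lem1w}(ii) applied to $\{e_n\}$ yields translations $x_n$ and $e \in F$ with $\|e_n(\cdot - x_n) - e\|_{H^1(\R;\R^m)} \to 0$ along a subsequence. Then
\[
\|e_n^{x_n}\|_{\tilde{\mathcal H}} = \|e_n(\cdot - x_n) - \ee_0\|_{H^1(\R;\R^m)} \leq \|e_n(\cdot - x_n) - e\|_{H^1(\R;\R^m)} + \|e - \ee_0\|_{H^1(\R;\R^m)},
\]
which stays bounded independently of $n$ (using $e \in \mathcal{\tilde H}$), contradicting the choice of $e_n$ for $n$ large enough.

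For part (iii), a standard minimizing sequence argument combined with (i) suffices. Given $v \in \mathcal H$, pick $\{e_k\} \subset F$ with $\|v - e_k\|_{\mathcal H} \to d_{\mathcal H}(v, F)$. Then $\|e_k\|_{\mathcal H} \leq \|v\|_{\mathcal H} + \|v - e_k\|_{\mathcal H}$ is bounded, so (i) provides $e \in F$ with $e_k \to e$ in $H^1 \subset \mathcal H$ along a subsequence; continuity of the norm then yields $\|v - e\|_{\mathcal H} = d_{\mathcal H}(v, F)$. The $\tilde{\mathcal H}$-case is identical: $\tilde{\mathcal H}$-boundedness of the minimizing sequence implies $\mathcal H$-boundedness, (i) applies, and $H^1$-convergence coincides with $\tilde{\mathcal H}$-convergence, so the limit realizes $d_{\mathcal{\tilde H}}(v, F)$.
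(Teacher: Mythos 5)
Your proposal is correct, and parts (ii) and (iii) coincide in substance with the paper's proof (same contradiction argument via Lemma \ref{lem1w}(ii) for (ii), same minimizing-sequence-plus-part-(i) argument for (iii)).

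Part (i) is where you take a genuinely different route. The paper starts from weak compactness: $\{e_k\}$ bounded in $\mathcal H$ gives $e_k\rightharpoonup e$; then it re-runs the \emph{internal} steps of Lemma \ref{lem1w} --- first the Ascoli argument from the proof of (i) to upgrade to $C_{\mathrm{loc}}$ convergence with $e\in F$, then the quantitative estimates after \eqref{claime} in the proof of (ii) --- to obtain $H^1$-convergence of $e_k$ to $e$ directly, with no translations ever introduced. You instead invoke Lemma \ref{lem1w}(ii) as a black box (legitimate, since $\mathcal W(e_k)\equiv 0$), which hands you $H^1$-convergence only after an unknown translation $x_k$, and you then supply the missing ingredient: the $\mathcal H$-boundedness of $\{e_k\}$ forces $\{x_k\}$ to be bounded, because an unbounded translation would make $e_k-\ee_0$ lie near $a^+-a^-$ on a growing interval and blow up the $L^2$ norm. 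That translation-bounding argument is correct (the change of variables, the choice of $Z$, and the $C^0$ control from $H^1\hookrightarrow C^0$ all check out, as does the final use of strong $H^1$-continuity of translations to pass to the limit $x_k\to x^\ast$). The trade-off: the paper avoids the translation issue entirely but must re-examine the proof of Lemma \ref{lem1w}; your route uses Lemma \ref{lem1w}(ii) cleanly as stated but needs the extra boundedness step. Both are sound, and yours is arguably more modular.
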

\begin{proof}
(i) Since $\{e_k\}\subset F$ is bounded in $\mathcal H$, we have up to subsequence $e_k \rightharpoonup e$ in $\mathcal H$, as $k\to\infty$, for some $e\in\mathcal H$.
Proceeding as in the proof of Lemma \ref{lem1w} (i), we first obtain that (up to subsequence) $e_k \to  e$ in $C_{\mathrm{loc}}(\R;\R^m)$, as $k\to\infty$, with $e\in F$. Next, we reproduce the arguments after \eqref{claime}, with $e_k$ instead of $\bar u_k$.

(ii) Assume by contradiction the existence of a sequence $\N \ni k \mapsto e_k\in F$, such that $\|e^T_k\|_{\mathcal{\tilde H}}\geq k$, $\forall T\in \R$. Then, by Lemma \ref{lem1w} (ii), there exists a sequence $\{x_k\}\subset\R$, and $e\in F$, such that (up to subsequence) the maps $e_k^{x_k}$ satisfy $\lim_{k\to\infty}\| e_k^{x_k}-e\|_{\mathcal{\tilde H}}=0$. Clearly, this is a contradiction.

(iii) Let $\{e_k\}\subset F$ be a sequence such that $\|v-e_k\|_{\mathcal H}\leq d_{\mathcal H}(v,F)+\frac{1}{k}$, $\forall k$. Then, in view of (i) we have (up to subsequence) $e_k\to e$ in $\mathcal H$, as $k\to\infty$, with $e\in F$. As a consequence
$d_{\mathcal H}(v,F)=\|v-e\|_{\mathcal H}$.
\end{proof}

In Lemma \ref{lem2} below, we give examples of potentials for which assumption \eqref{parti} holds.
\begin{lemma}\label{lem2}
Let $W\in C^2(\R^2;\R) $ be a potential satisfying \eqref{w13}. In addition we assume that
\begin{itemize}
\item $W(u_1,u_2)=W(u_1,-u_2)$,
\item $a^\pm=(\pm \lambda,0)$,
\item the heteroclinic orbit $\eta$ taking its values onto the open line segment $(a^-,a^+)$ is not minimal.\footnote{An explicit example of a potential satisfying all the above assumptions is constructed in \cite[Remark 3.6.]{antonop}.}
\end{itemize}
Then, $F$ is partitioned into two nonempty sets $F^\pm$, such that $d_{\mathcal H}(F^-,F^+)>0$.
 \end{lemma}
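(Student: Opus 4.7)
The plan is to partition $F$ by the sign of the second coordinate of a minimal heteroclinic. Setting
\[
F^+ := \{e \in F : e_2(x) > 0 \text{ for all } x \in \R\}, \quad F^- := \{e \in F : e_2(x) < 0 \text{ for all } x \in \R\},
\]
I would verify in turn that (a) every $e \in F$ belongs to $F^+ \cup F^-$, (b) both pieces are nonempty, and (c) $d_{\mathcal H}(F^+, F^-) > 0$.

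For (a), given $e \in F$ with $e_2(x_0) = 0$ at some $x_0 \in \R$, I would consider the concatenation $\tilde e(x) := e(x)$ for $x \leq x_0$ and $\tilde e(x) := (e_1(x), -e_2(x))$ for $x \geq x_0$. The symmetry $W(u_1, u_2) = W(u_1, -u_2)$ gives $J_\R(\tilde e) = J_\R(e) = J_{\mathrm{min}}$, so $\tilde e$ is also a minimal heteroclinic and therefore a classical $C^2$ solution of \eqref{ode0}; matching the left and right derivatives at $x_0$ forces $e_2'(x_0) = 0$. Since the symmetry of $W$ implies $\partial_2 W(\cdot, 0) \equiv 0$, the axis $\R \times \{0\}$ is invariant under the flow of \eqref{ode0}, so uniqueness of solutions with the data $e(x_0), e'(x_0) \in \R \times \{0\}$ forces $e_2 \equiv 0$; but then $e_1$ is a one-dimensional heteroclinic of $u_1'' = \partial_1 W(u_1, 0)$, which is unique up to translation, so $e$ is a translate of $\eta$, contradicting its non-minimality. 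Step (b) is then immediate: an existing minimal heteroclinic $e \in F$ (provided by the general existence results cited after \eqref{expest}) cannot be a translate of $\eta$, so by (a) it lies, after relabelling, in $F^+$, and its reflection $(e_1, -e_2)$ lies in $F^-$ by the symmetry of $W$ and of the action.

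For (c) I would argue by contradiction. Suppose $e_k^\pm \in F^\pm$ with $\|e_k^+ - e_k^-\|_{L^2} \to 0$. Choosing translations $T_k$ via Lemma \ref{propcon}(ii) so that $\bar e_k^+(x) := e_k^+(x - T_k)$ satisfies $\|\bar e_k^+\|_{\mathcal{\tilde H}} \leq \gamma$, and translating $e_k^-$ by the same $T_k$ to preserve the $L^2$ distance, Lemma \ref{propcon}(i) yields a subsequence with $\bar e_k^+ \to e^+$ in $H^1$. Since $(\bar e_k^+)_2 > 0$ and $H^1(\R) \hookrightarrow C^0$, the limit satisfies $(e^+)_2 \geq 0$, and step (a) then forces $e^+ \in F^+$. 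The sequence $\{\bar e_k^-\}$ is bounded in $\mathcal H$ (being $\bar e_k^+$ plus an $L^2$-null perturbation), so Lemma \ref{propcon}(i) again provides a subsequential $H^1$-limit $e^- \in F^-$; but $\bar e_k^+$ and $\bar e_k^-$ have the same $L^2$-limit, forcing $e^+ = e^-$ and contradicting $F^+ \cap F^- = \emptyset$. The main obstacle is step (a): ruling out interior zeros of $e_2$ requires the delicate combination of the reflection symmetry, the regularity of minimizers, and ODE uniqueness on the invariant axis $\R \times \{0\}$.
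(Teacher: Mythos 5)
Your proof is correct, and it takes a genuinely different route at the key step (a). The paper deduces that minimal heteroclinics avoid the axis $\{u_2=0\}$ from the fact that images of two distinct minimal heteroclinic orbits do not intersect: if $e\in F$ had $e_2(x_0)=0$, then $e$ and its reflection $\tilde e=(e_1,-e_2)$ would be minimal heteroclinics with intersecting images, so their images coincide, which (after ruling out a nontrivial time shift because $e_1$ has limits at $\pm\infty$) forces $e_2\equiv 0$ and hence $e$ a translate of $\eta$. You instead concatenate $e$ with its reflection at $x_0$, use minimality to promote the spliced competitor to a $C^2$ solution, read off $e_2'(x_0)=0$ from matching one-sided derivatives, and then invoke ODE uniqueness on the invariant axis (using $\partial_2 W(\cdot,0)\equiv 0$). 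This is more self-contained since it avoids relying on the non-crossing property of minimal heteroclinics; it does implicitly use uniqueness up to translation of the scalar heteroclinic for $W(\cdot,0)$, but that is a standard consequence of \eqref{w2} and energy conservation, and it is already built into the hypothesis that singles out $\eta$. For step (c), both proofs exploit the compactness of $F$ intersected with bounded sets (your Lemma \ref{propcon}); the paper reaches the contradiction slightly more directly by noting that $\|(e_k^+)_2\|_{L^2}\to 0$ (since $(e_k^\pm)_2$ have opposite signs) so the translated limit has vanishing second component and is therefore a minimal heteroclinic on the axis, contradicting the non-minimality of $\eta$, whereas you produce two $H^1$-limits lying in the disjoint sets $F^+$ and $F^-$ and show they must coincide. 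Both routes are sound; yours is a valid alternative.
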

\begin{proof}
By symmetry, if $x \mapsto (e_1(x),e_2(x))\in\R^2$ is a minimal heteroclinic orbit, then $x\mapsto(e_1(x),-e_2(x))$ is also a minimal heteroclinic orbit.
Since the images of two distinct minimal heteroclinic orbits do not intersect, and the heteroclinic orbit $\eta$ is not minimal, it follows that a minimal heteroclinic orbit either takes its values in the upper half-plane $\{u_2>0\}$ or in the lower half-plane $\{u_2<0\}$. We denote by $F^\pm$ the corresponding subsets. If $d_{\mathcal H}(F^-,F^+)=0$, then there exists a sequence $e_k=(f_k,g_k)\subset F^+$ such that $\lim_{k\to\infty}\|g_k\|_{L^2(\R)}=0$.
According to Lemma \ref{lem1new}, there also exists a sequence $x_k\in \R$, such that $\lim_{k\to\infty}e_k(x-x_k)=(f(x),0)=:u(x)\in A$. Furthermore, we have $J_\R(u)\leq J_{\mathrm{min}}$. Therefore, $u$ is a minimal heteroclinic orbit coinciding up to translations with $\eta$. This is a contradiction, since the orbit $\eta$ is not minimal.
\end{proof}

\section{Proof of Theorem \ref{connh2}}

\begin{proof}[Existence of the minimizer $U$]

To see that $\inf_{V\in\mathcal{A}}\mathcal J_{\R}(V)<\infty$, we take $\VV_0 \in \mathcal A$ as in \eqref{minfun}, with $e^\pm\in F^\pm$. Since $e^-$ and $e^+$ satisfy the exponential estimate \eqref{expest}, it is clear that
$\mathcal J_{\R}(\VV_0)<\infty$. Next, we define the constants
\begin{itemize}
\item $\mathcal W_1:=\inf\{\mathcal W (v): d_{\mathcal H}(v,F)\in [d_{\mathrm{min}}/8,d_{\mathrm{min}}/4]\}\in(0,\infty)$ (cf. Lemma \ref{lem1w} (ii)),
\item $M>0$ such that $\mathcal W(v)\leq 1\Rightarrow \|v\|_{L^\infty(\R;\R^m)}\leq M$ (cf. Lemma \ref{lem1} applied to $W$),
\item $C>0$ such that $|D^2W(v)(\nu,\nu)|\leq C$, $\forall v$: $|v|\leq M$, $\forall \nu \in \R^m$: $|\nu|=1$,
\item $\eta \in (0, d_{\mathrm{min}}/8)$ such that $(1+C)\eta^2<\sqrt{2\mathcal W_1}(d_{\mathrm{min}}/8)$,
\item $\mathcal W_2:=\inf\{\mathcal W(v): d_{\mathcal H}(v,F)\geq \eta\}\in(0,\infty)$ (cf. Lemma \ref{lem1w} (ii)),
\item $\epsilon\in(0,1)$ such that $\epsilon<\sqrt{2\mathcal W_1}(d_{\mathrm{min}}/8)-(1+C)\eta^2$,
\end{itemize}
and consider a minimizing sequence i.e. $\{V_k\}\subset\mathcal A$ such that $\lim_{k\to\infty}\mathcal J_\R(V_k)=\inf_{V\in\mathcal{A}}\mathcal J_{\R}(V)$. For every $k$, we set
$$\lambda^-_k:=\sup S_k^-,\text{ where } S_k^-:= \{ t\in\R: \mathcal W(V_k(t))\leq\epsilon, \ d_{\mathcal H}(V_k(t),F^-)\leq \eta\},$$
$$\lambda^+_k:=\inf S_k^+,\text{ where } S_k^+:=\{ t\geq \lambda_k^-: \mathcal W(V_k(t))\leq\epsilon, \ d_{\mathcal H}(V_k(t),F^+)\leq \eta\}.$$
Note that $S_k^\pm\neq\varnothing$, since $\mathcal J_{\R}(V_k)<\infty$ implies that $\liminf_{|t|\to\infty}\mathcal W(V_k(t))=0$, and also $\liminf_{|t|\to\infty}d_{\mathcal H}(V_k(t),F)=0$ by Lemma \ref{lem1w} (ii).
Moreover, one can see that actually $\lambda^-_k=\max S_k^-$, and $\lambda^+_k=\min S_k^-$.
Indeed, let $\{t_j\}\subset S_k^-$ be a sequence such that $t_j\to\lambda_k^-$, as $j\to\infty$. Then, there exists a sequence $\{e_j\}\subset F^-$ such that $\|V_k(t_j)-e_j\|_{\mathcal H}\leq \eta$. In addition, in view of Lemma \ref{propcon} (i), we have up to subsequence $e_j\to e$ in $\mathcal H$, as $j\to \infty$, for some $e\in F^-$, thus $\|V_k(\lambda_k^-)-e\|_{\mathcal H}\leq \eta$. On the other hand, from Lemma \ref{lem1w} (i) we get immediately that $\mathcal W(V_k(\lambda_k^-))\leq \epsilon$.

By definition of $\lambda^\pm_k$, either $\mathcal W(V_k(t))>\epsilon$ or $d_{\mathcal H}(V_k(t),F)>\eta$ holds for $t\in(\lambda^-_k,\lambda^+_k)$. Thus, we have $\mathcal W(V_k(t))\geq\min(\epsilon,\mathcal W_2)$, $\forall t\in(\lambda^-_k,\lambda^+_k)$, and as a consequence of the boundedness of the sequence $k\mapsto \mathcal J_\R(V_k)$, it follows that $\Lambda:=\sup_k(\lambda_k^+-\lambda_k^-)\in (0,\infty)$.
Our next claim is that we may assume that the minimizing sequence $\{V_k\}$ satisfies (cf. \cite[Lemma 4.3.]{antonop}):
\begin{equation}\label{solinas}
d_{\mathcal H}(V_k(t),F^-)\leq d_{\mathrm{min}}/4, \forall t\leq \lambda_k^-, \text{ and } d_{\mathcal H}(V_k(t),F^+)\leq d_{\mathrm{min}}/4, \forall t\geq \lambda_k^+.
\end{equation}
Indeed, if a map $V_k$ is such that for instance $d_{\mathcal H}(V_k(t_0),F^-)> d_{\mathrm{min}}/4$, for some $t_0< \lambda_k^-$, we can construct a competitor $\tilde V_k\in\mathcal A$, such that $\mathcal J_\R(\tilde V_k)\leq \mathcal J_\R(V_k)$, and   \eqref{solinas} holds for $\tilde V_k$. To see this, let $e^-\in F^-$ be such that $\|V_k(\lambda_k^-)-e^-\|_{\mathcal H}=d_{\mathcal H}(V_k(\lambda_k^-),F^-)\leq \eta$, and set
\begin{equation}\label{competitor}
\tilde V_k(t):=
\begin{cases}
V_k(t),&\text{ for } t\geq \lambda_k^-,\\
e^-+(t-\lambda^-_k+1)(V_k(\lambda_k^-)-e^-),&\text{ for } t\in [\lambda_k^--1,\lambda_k^-]\\
e^-,&\text{ for } t\leq \lambda_k^--1.
\end{cases}
 \end{equation}
One can see that $\int_{-\infty}^{\lambda_k^-}\|\tilde V'_k\|^2_{\mathcal H}=\|V_k(\lambda_k^-)-e^-\|^2_{\mathcal H}$. Next, applying formula \eqref{formula} to $e=e^-$ and $u=V_k(\lambda_k^-)$ together with $\mathcal W(V_k(\lambda_k^-))\leq\epsilon$, we obtain
$\int_\R\frac{1}{2}|(V_k(\lambda_k^-)-e^-)'|^2\leq \epsilon+\frac{C}{2}\|V_k(\lambda_k^-)-e^-\|^2_{\mathcal H}$. Finally, a second application of formula \eqref{formula} to $e^-$ and $e^-+s(V_k(\lambda_k^-)-e^-)$, with $s\in[0,1]$, gives
$\mathcal W(\tilde V_k(t))\leq \epsilon+C\|V_k(\lambda_k^-)-e^-\|^2_{\mathcal H}$, $\forall  t\in [\lambda_k^--1,\lambda_k^-]$. Thus we have checked that $\mathcal J_{(-\infty,\lambda_k^-]}(\tilde V_k)\leq \epsilon+(C+1)\|V_k(\lambda_k^-)-e^-\|^2_{\mathcal H}\leq \epsilon+(C+1)\eta^2$.
On the other hand, assuming that
$d_{\mathcal H}(V_k(t_0),F^-)> d_{\mathrm{min}}/4$ holds for some $t_0< \lambda_k^-$, we have
$$\mathcal J_{[t_0,\lambda_k^-]}(V_k)\geq\int_{[t_0,\lambda_k^-]}\sqrt{2\mathcal  W(V_k)}\|V'_k\|_{\mathcal H} \geq \sqrt{2\mathcal W_1 }( d_{\mathrm{min}}/8).$$
Therefore, by definition of $\epsilon$ and $\eta$ we deduce that $\mathcal J_{(-\infty,\lambda_k^-]}(\tilde V_k)\leq \mathcal J_{(-\infty,\lambda_k^-]}( V_k)$. This proves our claim \eqref{solinas}.

To show the existence of the minimizer $U$, we shall consider appropriate translations of the sequence $v_k(t,x):=[V_k(t)](x)$ ($\R\ni t\mapsto V_k(t)\in \mathcal H$), with respect to the variables $x$ and $t$. Then, we shall establish the convergence of the translated maps to the minimizer $U$. Given $T\in\R$, and $V\in \mathcal H=\ee_0+L^2(\R;\R^m)$, we denote by $L^T(V)$ the map of $\mathcal H$ defined by $\R\ni x\mapsto V(x-T)\in \R^m$. It is obvious that $\mathcal W(L^T(V))=\mathcal W(V)$.
Similarly, if $t\mapsto V(t)$ belongs to $H^{1}_{\mathrm{loc}}(\R;\mathcal H)$, we obtain that $t\mapsto L^T(V(t))$ also belongs to $H^{1}_{\mathrm{loc}}(\R;\mathcal H)$, with $\|(L^T V)'(t)\|_{L^2(\R;\R^m)}=\|V'(t)\|_{L^2(\R;\R^m)}$.

In view of Lemma \ref{propcon} (ii), for every $k$, we can find $T_k\in \R$ and $e_k \in F^-$ such that
$\|e_k\|_{\mathcal H}\leq \gamma$ and $\||L^{T_k}V_k(\lambda_k^-)-e_k\|_{\mathcal H}\leq \eta$. We set $\bar V_k(t):=L^{T_k}(V_k(t+\lambda_k^-))$. Clearly, $\bar V_k \in H^{1}_{\mathrm{loc}}(\R;\mathcal H)$ satisfies $\mathcal J_\R(\bar V_k)=\mathcal J_\R(V_k)$, as well as
\begin{equation}\label{solinas2}
d_{\mathcal H}(\bar V_k(t),F^-)\leq d_{\mathrm{min}}/4, \forall t\leq 0, \text{ and } d_{\mathcal H}(\bar V_k(t),F^+)\leq d_{\mathrm{min}}/4, \forall t\geq \Lambda.
\end{equation}
Since $\|\bar V_k(0)\|_{\mathcal H}\leq \eta+\gamma$ holds for every $ k$, 
we have that (up to subsequence) $\bar V_k(0) \rightharpoonup  u_0$ in $\mathcal H$, as $k\to\infty$, for some $u_0\in\mathcal H$.
Next, proceeding as in the proof of Lemma \ref{lem1new}
we can see that (up to subsequence)
$\bar V'_k\rightharpoonup V$ in $L^2(\R;L^2(\R;\R^m))$ as $k\to\infty$, and  moreover
setting $\bar V_k(t)=\bar V_k(0)+\int_0^t\bar V'_k(s)\dd s$, and $U(t)= u_0+\int_0^t V(s)\dd s$, we have $\bar V_k(t)\rightharpoonup U(t)$ in $\mathcal H$, as $k\to \infty$, $\forall t\in\R$.
The fact that $\mathcal J_\R(U)\leq\liminf_{k\to\infty}\mathcal J_{\R}(V_k)$ follows as in the proof of Lemma \ref{lem1new} from the sequentially weakly lower semicontinuity of $\mathcal W$ (cf. Lemma \ref{lem1w} (i)). To conclude that
$\mathcal J_{\R}(U)=\min_{V\in\mathcal{A}}\mathcal J_{\R}(V)$, we are going to check that $U$ satisfies \eqref{solinas2}. Indeed, given $t\leq0$, let $\{e_k\}\subset F^-$ be such that $\|\bar V_k(t)-e_k\|_{\mathcal H}\leq d_{\mathrm{min}}/4$, $\forall k$. Since $\{e_k\}$ is bounded in $\mathcal H$, we have (up to subsequence) $\lim_{k\to\infty} e_k=e$ in $\mathcal H$, for some $e\in F^-$ (cf. Lemma \ref{propcon} (i)). Thus, it is clear that $d_{\mathcal H}(U(t),F^-)\leq d_{\mathcal H}(U(t),e)\leq \liminf_{k\to\infty}\|\bar V_k(t)-e_k\|_{\mathcal H}\leq d_{\mathrm{min}}/4$. Similarly, $d_{\mathcal H}(U(t),F^+)\leq d_{\mathrm{min}}/4$ holds for $t\geq\Lambda$.
\end{proof}

\begin{proof}[Proof of (i), (ii), (iii) and (iv)] We first establish two lemmas:
\begin{lemma}\label{sobolev}
Writing $U(t)=\ee_0+H(t)$, with $$H\in H^1_{\mathrm{loc}}(\R;L^2(\R;\R^m))\subset L^2_{\mathrm{loc}}(\R;L^2(\R;\R^m)),$$ and identifying $H$ with a $L^2_{\mathrm{loc}}(\R^2;\R^m)$ function via $h(t,x):=[H(t)](x)$, we have
\begin{itemize}
\item[(i)] $h \in H^1_{\mathrm{loc}}(\R^2;\R^m)$, $h_t\in L^2(\R^2;\R^m)$,
\item[(ii)] and $\|h_x\|_{L^2((\alpha,\beta)\times \R;\R^m)}^2\leq C_0 (|\beta-\alpha|)$, for a constant $C_0>0$ depending only on the length of the interval $(\alpha,\beta)\subset\R$.
\end{itemize}
\end{lemma}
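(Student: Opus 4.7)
The plan is to use the two pieces of information encoded in $\mathcal J_\R(U)<\infty$ separately: the kinetic part $\int_\R \|U'(t)\|_{L^2(\R;\R^m)}^2\,\dd t<\infty$ controls $h_t$ globally on $\R^2$, while the potential part $\int_\R \mathcal W(U(t))\,\dd t<\infty$ controls $h_x$ slice by slice in the $x$-variable, uniformly on bounded $t$-intervals. Once these two $L^2$ bounds on the ``slicewise'' derivatives are in hand, the remaining work is to identify them with the genuine distributional derivatives of $h(t,x)=[H(t)](x)$ on $\R^2$.

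\textbf{Estimate on $h_t$.} Since $U\in H^1_{\mathrm{loc}}(\R;\mathcal H)$ with $\int_\R\|U'(t)\|_{L^2(\R;\R^m)}^2\,\dd t\leq 2\mathcal J_\R(U)<\infty$, the map $H'=U'$ belongs to $L^2(\R;L^2(\R;\R^m))$. Identifying this Bochner space with $L^2(\R^2;\R^m)$ via Fubini, I obtain a function $\tilde h_t(t,x):=[H'(t)](x)\in L^2(\R^2;\R^m)$ with $\|\tilde h_t\|_{L^2(\R^2)}^2=\int_\R\|U'(t)\|_{L^2}^2\,\dd t$. For any $\phi\in C^\infty_0(\R^2;\R^m)$, the Hilbert-valued fundamental theorem of calculus applied to $t\mapsto H(t)$ yields
\[
\int_{\R^2}h\cdot \phi_t=\int_\R \langle H(t),\phi_t(t,\cdot)\rangle_{L^2}\,\dd t=-\int_\R\langle H'(t),\phi(t,\cdot)\rangle_{L^2}\,\dd t=-\int_{\R^2}\tilde h_t\cdot\phi,
\]
so that $h_t=\tilde h_t$ in $\mathcal D'(\R^2;\R^m)$ and $h_t\in L^2(\R^2;\R^m)$.

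\textbf{Estimate on $h_x$.} For a.e.\ $t$, $\mathcal W(U(t))<\infty$, and by definition of $\mathcal W$ this forces $U(t)\in A$ with $\mathcal W(U(t))=J_\R(U(t))-J_{\mathrm{min}}$. Integrating over $(\alpha,\beta)$ gives
\[
\int_\alpha^\beta J_\R(U(t))\,\dd t\leq \mathcal J_\R(U)+J_{\mathrm{min}}(\beta-\alpha).
\]
Since $\frac12|u_x(t,\cdot)|^2\leq \frac12|u_x|^2+W(u)$, I deduce $\int_\alpha^\beta\|u_x(t,\cdot)\|_{L^2(\R)}^2\,\dd t\leq 2\mathcal J_\R(U)+2J_{\mathrm{min}}(\beta-\alpha)$. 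Writing $u(t,x)=\ee_0(x)+h(t,x)$, so that $h_x(t,\cdot)=u_x(t,\cdot)-\ee_0'$ slicewise, and using $\|\ee_0'\|_{L^2(\R;\R^m)}^2=|a^+-a^-|^2/2$, the triangle inequality yields the desired bound $\|h_x\|_{L^2((\alpha,\beta)\times\R;\R^m)}^2\leq C_0(|\beta-\alpha|)$ with $C_0$ affine in $|\beta-\alpha|$ and depending on $\mathcal J_\R(U)$, $J_{\mathrm{min}}$ and $|a^+-a^-|$. A Fubini argument analogous to the one above (integrating the slicewise identity $\int h(t,\cdot)\partial_x\phi(t,\cdot)=-\int [\partial_x H(t)]\phi(t,\cdot)$ in $t$, with integrability secured by Cauchy--Schwarz) identifies this slicewise derivative with the distributional $\partial_x h$ on $\R^2$.

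\textbf{Conclusion of (i).} It remains to show $h\in L^2_{\mathrm{loc}}(\R^2;\R^m)$, which follows from the continuous embedding $H^1_{\mathrm{loc}}(\R;L^2(\R;\R^m))\hookrightarrow C(\R;L^2(\R;\R^m))$: the map $t\mapsto \|H(t)\|_{L^2(\R)}$ is continuous, hence bounded on any $[\alpha,\beta]$, giving $\|h\|_{L^2((\alpha,\beta)\times\R)}^2=\int_\alpha^\beta \|H(t)\|_{L^2}^2\,\dd t<\infty$. Combined with the global bound on $h_t$ and the local bound on $h_x$, this establishes $h\in H^1_{\mathrm{loc}}(\R^2;\R^m)$.

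The main technical point will be the Fubini/distributional-identification step for $h_x$: unlike $h_t$, the derivative $h_x$ is only defined slicewise (as a distribution in $x$ for each $t$) rather than as a Bochner derivative, so care is required to verify that the slicewise $\partial_x H(t)\in L^2(\R;\R^m)$ (available only for a.e.\ $t$) assembles into the genuine distributional derivative of $h$ on $\R^2$. This is the one step deserving full detail in the write-up.
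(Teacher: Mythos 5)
Your proposal is correct in its main lines and differs from the paper's proof only in the treatment of $h_x$. For $h_t$, you follow essentially the paper's route: identify $U'=H'\in L^2(\R;L^2(\R;\R^m))$ with an $L^2(\R^2;\R^m)$ function via Fubini, then verify $h_t=\tilde h_t$ in $\mathcal D'(\R^2)$ by integration by parts for Hilbert-valued $H^1$ functions (the paper does this by approximation with $C^1([\alpha,\beta];L^2)$ maps). For $h_x$, both you and the paper obtain the same slicewise estimate $\int_\R|h_x(t,\cdot)|^2\leq 4(\mathcal W(U(t))+J_{\mathrm{min}})+2\|\ee_0'\|_{L^2}^2$ for a.e. $t$ and integrate it over $(\alpha,\beta)$; the difference is in how the slicewise derivative is identified with the distributional derivative on $\R^2$. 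You propose a direct Fubini/integration-by-parts argument and correctly flag it as the delicate step: one must know that $(t,x)\mapsto [\partial_x H(t)](x)$ is a measurable function on $\R^2$, which is not automatic since $t\mapsto [H(t)]'\in L^2(\R;\R^m)$ is defined only a.e. and its Bochner measurability requires justification. The paper sidesteps this entirely by working with difference quotients of $h$ itself: the slicewise bound yields, after integration in $t$, a uniform $L^2((\alpha,\beta)\times\R)$ bound on the quotients $\eta^{-1}(h(t,x+\eta)-h(t,x))$, and weak compactness then produces $h_x\in L^2((\alpha,\beta)\times\R)$ with no measurability concern. So both routes are viable, but the difference-quotient device is precisely the tool that removes the issue you flagged as ``deserving full detail''; if you pursue the Fubini route, you should supply the measurability argument explicitly.
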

\begin{proof}
We recall that given any interval $(\alpha,\beta)$, we can identify $L^2((\alpha,\beta)\times \R;\R^m)$ with $L^2((\alpha,\beta);L^2(\R;\R^m))$ via the canonical isomorphism
\begin{align*}\label{isom}
L^2((\alpha,\beta)\times \R;\R^m)&\simeq L^2((\alpha,\beta);L^2(\R;\R^m))\\
f&\simeq [(\alpha,\beta)\ni t \mapsto [F(t)]:x\mapsto f(t,x)], \ F(t)\in L^2(\R;\R^m).
\end{align*}
Let $g(t,x):=[U'(t)](x)$, with $g\in L^2(\R^2;\R^m)$, and let us prove that $h_t=g$.
Given a function $\phi \in C^\infty_0(\R^2;\R^m)$, we also view it as a map $\Phi\in C^1(\R; L^2(\R;\R^m))$, $t\mapsto \Phi(t)$, by setting $[\Phi(t)](x):=\phi(t,x)$.
Assuming that $\supp\Phi\subset(\alpha,\beta)$, we have $$\int_{\R^2} [h\phi_t+g\phi]=\int_\alpha^\beta(\langle H(t),\Phi_t(t)\rangle_{\mathcal H}+\langle H_t(t),\Phi(t)\rangle_{\mathcal H})\dd t,$$
and clearly the second integral vanishes if $H\in C^1([\alpha,\beta]; L^2(\R;\R^m))$. Since $H$ can be approximated in $H^1((\alpha,\beta); L^2(\R;\R^m))$ by $C^1([\alpha,\beta]; L^2(\R;\R^m))$ maps, we deduce that $\int_{\R^2} [h\phi_t+g\phi]=0$, i.e. $h_t=g$.

On the other hand, $\int_\R\mathcal W(U(t))\dd t<\infty$ implies that for a.e. $t\in\R$, we have $\mathcal W(U(t))<\infty$, and $U(t)\in \mathcal{\tilde H}$.
By using difference quotients, we can see that
\begin{equation}\label{sob2}
\int_\R\Big|\frac{h(t,x+\eta)-h(t,x)}{\eta}\Big|^2\dd x \leq k\int_\R |h_x|^2 \leq 4k(\mathcal W(U(t))+J_{\mathrm{min}}) +2k\|\ee'_0\|_{L^2(\R;\R^m)}^2,
\end{equation}
holds for a.e. $t\in\R$, for $\eta\in\R\setminus\{0\}$, and some constant $k>0$. Thus, the difference quotients $\frac{h(t,x+\eta)-h(t,x)}{\eta}$ are bounded in $L^2((\alpha,\beta)\times\R;\R^m)$ for every interval $[\alpha,\beta]\subset\R$, and as a consequence $h_x\in L^2((\alpha,\beta)\times\R;\R^m)$. Finally, an integration of \eqref{sob2} gives $\|h_x\|_{L^2((\alpha,\beta)\times \R;\R^m)}^2\leq C_0 (|\beta-\alpha|)$, with $$C_0=4k\int_\R\mathcal W(U(t))\dd t+2k|\beta-\alpha|(2J_{\mathrm{min}}+\|\ee'_0\|_{L^2(\R;\R^m)}^2).$$
\end{proof}

\begin{lemma}\label{bU}
If \eqref{eu1} holds, there exists a minimizer $ U$ of $\mathcal J_\R$ in $\mathcal A$ satisfying:
\begin{equation}\label{boundU}
\|U(t)\|_{L^\infty(\R;\R^m)}\leq \rho, \, \forall t \in \R.
\end{equation}
\end{lemma}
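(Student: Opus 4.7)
The plan is to take the minimizer $U$ already produced and replace it by its pointwise projection onto the closed ball $\overline{B_\rho}:=\{z\in\R^m:|z|\leq\rho\}$, then verify the truncated map $\tilde U$ remains a minimizer in $\mathcal A$.

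Two geometric preliminaries drive the argument. First, both wells must lie in $\overline{B_\rho}$: if $|a^+|>\rho$, writing $a^+=s\hat a$ with $|\hat a|=\rho$ and $s>1$ and invoking \eqref{eu1} gives $0=W(a^+)=W(s\hat a)\geq W(\hat a)\geq 0$, producing a third zero of $W$ and contradicting \eqref{w1}. In particular $\ee_0(x)\in\overline{B_\rho}$ for every $x$. Second, the nearest-point projection $P:\R^m\to\overline{B_\rho}$ is $1$-Lipschitz and fixes $\ee_0(x)$, while hypothesis \eqref{eu1} is exactly the pointwise inequality $W\circ P\leq W$ on $\R^m$.

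Setting $u(t,x):=[U(t)](x)$ (with the Sobolev regularity of Lemma~\ref{sobolev}) and $\tilde u:=P\circ u$, the Lipschitz bound gives $|\tilde u_t|\leq|u_t|$ and $|\tilde u_x|\leq|u_x|$ a.e., and since $P$ fixes $\ee_0(x)$, the map $\tilde u-\ee_0$ inherits the local Sobolev regularity of $u-\ee_0$. Hence $\tilde U(t):=\tilde u(t,\cdot)$ defines an element of $H_{\rm loc}^1(\R;\mathcal H)$ bounded by $\rho$ in $L^\infty$, with $\|\tilde U'(t)\|_{L^2}\leq\|U'(t)\|_{L^2}$ a.e. When $\mathcal W(U(t))<\infty$, so that $u(t,\cdot)\in A$, the identity $P(a^\pm)=a^\pm$ gives $\tilde u(t,\cdot)\in A$; combining $|\tilde u_x|\leq|u_x|$ with $W(\tilde u)\leq W(u)$ yields $J_\R(\tilde u(t,\cdot))\leq J_\R(u(t,\cdot))$, hence $\mathcal W(\tilde U(t))\leq\mathcal W(U(t))$. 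Integrating both terms delivers $\mathcal J_\R(\tilde U)\leq\mathcal J_\R(U)=\min_{\mathcal A}\mathcal J_\R$.

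The remaining step, which I expect to be the main obstacle, is to verify $\tilde U\in\mathcal A$. For every $e\in F$ the same energy inequality applied to $J_\R$ shows $P\circ e\in F$, and since $P$ acts as a $1$-Lipschitz map on $\mathcal H$, we have $\|\tilde U(t)-P\circ e\|_{\mathcal H}\leq\|U(t)-e\|_{\mathcal H}$ for every $e\in F$. What must be checked is that the partition is preserved, i.e.\ $P(F^\pm)\subseteq F^\pm$, so that the distance to $F^\pm$ itself remains $\leq d_{\mathrm{min}}/4$ for large $|t|$. I would handle this by revisiting the minimizing-sequence construction with the projected sequence $\{P\circ V_k\}$: the competitors $e_k^\pm\in F^\pm$ appearing in \eqref{competitor} can then be chosen from $P(F^\pm)$, and the positive separation $d_{\mathrm{min}}>0$ together with the continuity of $P$ prevents $P(F^-)$ and $P(F^+)$ from being exchanged. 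Once $\tilde U\in\mathcal A$ is established, the energy inequality above shows it is the sought bounded minimizer.
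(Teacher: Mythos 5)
Your overall strategy — truncate $U$ by the radial projection $P$ onto $\overline{B_\rho}$ and check that the truncation is again an admissible minimizer — is exactly the paper's. The computations leading to $\|\tilde U'(t)\|_{L^2}\leq\|U'(t)\|_{L^2}$ and $\mathcal W(\tilde U(t))\leq\mathcal W(U(t))$ are correct and in line with the paper. However, the step you yourself flag as the main obstacle, namely $\tilde U\in\mathcal A$, is where a genuine gap appears, and the detour you propose does not close it.

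The missing ingredient is that $P$ \emph{fixes $F$ pointwise}: every minimal heteroclinic $e\in F$ already satisfies $\|e\|_{L^\infty(\R;\R^m)}\leq\rho$, so $Pe=e$. This follows from the very same comparison you use for $U$: if $e\in F$ exited the ball, then on the exit interval one writes $e=r\omega$ (polar form) and checks $|(Pe)'|^2=\rho^2|\omega'|^2< (r')^2+r^2|\omega'|^2=|e'|^2$ on a set of positive measure, while $W(Pe)\leq W(e)$ by \eqref{eu1}; hence $J_\R(Pe)<J_\R(e)=J_{\mathrm{min}}$, a contradiction. Once $Pe=e$ is in hand, the $1$-Lipschitz estimate gives, for the $e\in F^-$ realising $d_{\mathcal H}(U(t),F^-)$ (Lemma~\ref{propcon}(iii)),
\[
d_{\mathcal H}(PU(t),F^-)\ \leq\ \|PU(t)-e\|_{\mathcal H}=\|PU(t)-Pe\|_{\mathcal H}\ \leq\ \|U(t)-e\|_{\mathcal H}=d_{\mathcal H}(U(t),F^-)\leq d_{\mathrm{min}}/4
\]
for $t\leq t_U^-$, and symmetrically at $+\infty$. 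No re-running of the minimizing-sequence construction is needed.

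By contrast, the argument you sketch — that ``$d_{\mathrm{min}}>0$ together with the continuity of $P$ prevents $P(F^-)$ and $P(F^+)$ from being exchanged'' — does not hold up: a $1$-Lipschitz map can only shrink distances, so a priori it could well move some $e^-\in F^-$ to within $d_{\mathrm{min}}/4$ of $F^+$, and nothing in continuity alone rules this out. The observation $Pe=e$ for $e\in F$ is what makes the question moot. (As a smaller point, your argument that $|a^\pm|\leq\rho$ also needs a slight patch: if $|a^+|>\rho$ and $\rho a^+/|a^+|$ happened to equal $a^-$, your ``third zero'' contradiction would not immediately apply; but this case is again excluded by the $Pe=e$ argument above applied to a heteroclinic, so the conclusion $\|e\|_{L^\infty}\leq\rho$ in particular gives $|a^\pm|\leq\rho$.)
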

\begin{proof}
Let $P:\R^m\to\R^m$ be the projection onto the closed ball $\{u\in \R^m: |u|\leq \rho\}$. Given $V\in \mathcal H$, it is clear that the map $PV:x\mapsto P(V(x))$ belongs to $\mathcal H$. In addition, given
$V_1,V_2\in \mathcal H$, we have $\|PV_1-PV_2\|_{ \mathcal H}\leq \|V_1-V_2\|_{ \mathcal H}$. As a consequence, the map $PU: t\mapsto P(U(t))\in \mathcal H$ belongs to $H^1_{\mathrm{loc}}(\R;\mathcal H)$, and $\|(PU)'(t)\|_{L^2(\R;\R^m)}\leq \|U'(t)\|_{L^2(\R;\R^m)}$ holds for a.e. $t\in \R$. On the other hand, it is clear that $\mathcal W((PU)(t))\leq \mathcal W(U(t))$ holds for every $t\in\R$. To deduce that $PU$ is a minimizer of $\mathcal J_\R$ in $\mathcal A$, it remains to check that $PU$ satisfies \eqref{solinas2}. Given $t\leq0$, let $e\in F^-$ be such that $\| U(t)-e\|_{\mathcal H}\leq d_{\mathrm{min}}/4$, and note that $\|e\|_{L^\infty(\R;\R^m)}\leq\rho$, since $e$ is a minimal heteroclinic. This implies that for every $x\in \R$, we have $|[PU(t)](x)-e(x)|\leq |[U(t)](x)-e(x)|$. Thus, it follows that $d_{\mathcal H}(PU(t),e)\leq d_{\mathcal H}(U(t),e)\leq d_{\mathrm{min}}/4$. Similarly, $d_{\mathcal H}(PU(t),F^+)\leq d_{\mathrm{min}}/4$ holds for $t\geq\Lambda$.
\end{proof}

Given a function $\phi \in C^1_0(\R^2;\R^m)$, we also view it as a map $\Phi\in C^1_0(\R; L^2(\R;\R^m))$, $t\mapsto \Phi(t)$, by setting $[\Phi(t)](x):=\phi(t,x)$.
For every $\lambda \in \R$, it is clear that
\begin{equation}\label{var1}
\mathcal J_\R(U)\leq\mathcal J_\R(U+\lambda\Phi),
\end{equation}
and
\begin{equation}\label{var33}
\frac{\dd}{\dd\lambda}\Big|_{\lambda=0}\int_\R\frac{1}{2}\|U'(t)+\lambda \Phi'(t)\|_{L^2(\R;\R^m)}^2\dd t=\int_\R\langle U'(t),\Phi'(t)\rangle_{L^2(\R;\R^m)}\dd t.
\end{equation}
On the other hand, since $\int_\R\mathcal W(U(t))\dd t<\infty$, it follows that for a.e. $t\in\R$, we have $\mathcal W(U(t))<\infty$, and $U(t)\in \mathcal{\tilde H}$.
Our claim is that
\begin{equation}\label{var22}
\frac{\dd}{\dd\lambda}\Big|_{\lambda=0}\int_\R\mathcal W(U(t)+\lambda\Phi(t))\dd t=\int_{\R}\psi(t)\dd t,
\end{equation}
where $\psi(t):=\int_\R\big[\frac{\dd [U(t)]}{\dd x}\cdot \frac{\partial\phi(t,x)}{\partial x}+\nabla W([U(t)](x))\cdot\phi(t,x)\big]\dd x$.
We first notice that for every $\lambda\neq 0$, the functions
$\psi_\lambda(t):=\frac{1}{\lambda}[\mathcal W(U(t)+\lambda \Phi(t))-\mathcal W(U(t))]$ are defined a.e. Moreover, we can see that $\psi_\lambda(t)$ is equal to
\begin{equation}\label{cvboundd}
\int_\R\Big[\frac{\dd [U(t)]}{\dd x}\cdot \frac{\partial\phi(t,x)}{\partial x}+\frac{\lambda}{2}\Big|\frac{\partial\phi(t,x)}{\partial x}\Big|^2+\nabla W([U(t)](x)+c_\lambda(t,x)\lambda \phi(t,x))\cdot\phi(t,x)\Big]\dd x,
\end{equation}
with $0\leq c_\lambda(t,x)\leq 1$. As a consequence, we obtain $\lim_{\lambda\to 0}\psi_\lambda(t)=\psi(t)$ for a.e. $t\in\R$. Finally, setting $u(t,x):=[U(t)](x)$, we have $u\in H^1_{\mathrm{loc}}(\R^2;\R^m)\subset L^q_{\mathrm{loc}}(\R^2;\R^m)$, $\forall q\geq 2$ (cf. Lemma \ref{sobolev}), and moreover $u\in L^\infty(\R^2;\R^m)$ when \eqref{eu1} holds (cf. \eqref{boundU}). This implies that either under assumption \eqref{eu2} or \eqref{eu1}, we can find a function $\Psi\in L^1(\R)$ such that
$|\psi_\lambda(t)|\leq \Psi(t)$ holds a.e., when $|\lambda|$ is small. Thus, we deduce \eqref{var22} by dominated convergence. Now, we gather the previous results to conclude.
In view of \eqref{var1}, \eqref{var33} and \eqref{var22}, the minimizer $U$ satisfies the Euler-Lagrange equation
\begin{equation}\label{euler1}
\int_{\R}(\langle U'(t),\Phi'(t)\rangle_{L^2(\R;\R^m)}+\psi(t))\dd t=0.
\end{equation}
 which is equivalent to
\begin{equation}\label{euler2}
\int_{\R^2}(\nabla u\cdot \nabla\phi +\nabla W(u)\cdot \phi)=0.
\end{equation}
By elliptic regularity (cf. respectively \cite[Theorem 8.34. and Corollary 4.14.]{gilbarg} under assumption \eqref{eu1}, and \cite[Theorem 8.8. and Corollary 4.14.]{gilbarg} under assumption \eqref{eu2}) it follows that $u$ is a classical solution of \eqref{system0}. When \eqref{eu1} holds it is clear that $u$ is uniformly continuous on $\R^2$, since $|\nabla u|$ is bounded on $\R^2$.
Similarly, when \eqref{eu2} holds, Lemma \ref{sobolev} implies that $\|u\|_{H^1(D;\R^m)}$ and $\|\nabla W(u)\|_{L^2(D;\R^m)}$ are uniformly bounded on the discs $D$ of radius $1$ included in the strip $[\alpha,\beta]\times\R$ (with $[\alpha,\beta]\subset\R$). Thus, in view of  \cite[Theorem 8.8.]{gilbarg}, $u$ is uniformly continuous on the strip $[\alpha,\beta]\times\R$.
To prove \eqref{lay2b}, assume by contradiction the existence of a sequence $(t_k,x_k)$ such that $\lim_{k\to\infty}x_k=\infty$, $t_k\in[\alpha,\beta]$, and $|u(t_k,x_k)-a^+|>\epsilon>0$. As a consequence of the uniform continuity of $u$, we can construct a sequence of disjoint discs of fixed radius, centered at $(t_k,x_k)$, over which $W(u)$ is bounded uniformly away from zero. This clearly violates the finiteness of $E_{[\alpha,\beta]\times\R}(u)=\mathcal J_{[\alpha,\beta]}(U)+J_{\mathrm{min}}(\beta-\alpha)$. To prove \eqref{lay1b}, assume by contradiction the existence of a sequence $t_k$ such that $\lim_{k\to\infty}t_k=\infty$, and $d_{\mathcal H}(U(t_k),F^+)>2\epsilon>0$. Since $\R\ni t\mapsto U(t)\in \mathcal H$ is uniformly continuous, we can construct a sequence of disjoint intervals $
[t_k-\eta, t_k+\eta]$ of fixed length over which $d_{\mathcal H}(U(t),F^+)>\epsilon>0$, and $\mathcal W(U(t))$ is bounded uniformly away from zero (cf. Lemma \ref{lem1w} (ii)). This again violates the finiteness of $\mathcal J_{\R}(U)$. Finally, the equipartition property (iii) is established as in Theorem \ref{connh}, and (iv) follows from \eqref{var1}, since $E_{[\alpha,\beta]\times \R}(u+\phi)=\mathcal J_{[\alpha,\beta]}(U+\Phi)+(\beta-\alpha)J_{\mathrm{min}}$, if $\supp \phi\subset (\alpha,\beta)\times\R$.
\end{proof}

\begin{proof}[Proof of \eqref{layernew}]
The proof proceeds as in \cite[Proposition 6.1.]{antonop}. In view of \eqref{nondegW}, let $t_0\in\R$ and $\kappa>0$ be such that
\begin{equation}
\label{tube}
\mathcal W(U(t))\geq \kappa d_{\mathcal H}^2(U(t),F^-),\ \forall t\leq t_0.
\end{equation}
For $t \leq t_0$ fixed, let $e^- \in  F^-$ be such that $d_{\mathcal H}(U(t),e^-)=d_{\mathcal H}(U(t),F^-)$, and define the map
\begin{equation}\label{comparison3}
Z(s)=\begin{cases}
U(t)+(t-s)(e^--U(t)),&\text{ for } t-1\leq s \leq t,\\
e^-, &\text{ for } s \leq t-1.
\end{cases}\end{equation}
By reproducing the argument after \eqref{competitor} we obtain
$\mathcal J_{[t-1,t]}(Z)\leq  \mathcal W(U(t))+(C+1)d_{\mathcal H}^2(U(t),F^-)$, with $C=\sup_{|u|\leq \rho, |\nu|=1}|D^2W(u)(\nu,\nu)|$.
Thanks to the variational characterization of $U$ and to \eqref{tube}, it follows that
\begin{multline}\label{comput1}
\kappa \int_{-\infty}^t d^2_{\mathcal H}(U(s), F^-) \dd s \leq\int_{-\infty}^t \mathcal W(U(s)) \dd s \leq
\mathcal J_{(-\infty,t]}(U)
\leq\mathcal J_{[t-1,t]}(Z)\leq   \mathcal W(U(t))+(C+1)d_{\mathcal H}^2(U(t),F^-),
\end{multline}
Setting $\theta(t):=\int_{-\infty}^t (d^2_{\mathcal H}(U(s), F^-)+\mathcal W(U(s)) )\dd s$, we deduce that $\theta \in W^{1,1}_{\mathrm{loc}}((-\infty,t_0])$, and
$\gamma \theta\leq \theta'$ holds a.e. on $(-\infty,t_0]$ for some constant $\gamma>0$. By integrating this inequality, it follows that
\begin{equation}\label{comput0}
\theta(t)\leq \theta(t_0)e^{\gamma(t-t_0)}.
\end{equation}
Now, we notice that by the equipartition property, we have
\begin{equation}\label{intU2}
\int_{-\infty}^t \| U'(s)\|_{L^2(\R:\R^m)}^2\dd s\leq 2\theta(t_0)e^{\gamma(t-t_0)}, \, \forall t\leq t_0,
\end{equation}
and for every $ j\in \N$:
\begin{equation}\label{intU2b}
\int_{t-j-1}^{t-j} \| U'(s)\|_{L^2(\R:\R^m)}\dd s\leq \Big(\int_{t-j-1}^{t-j} \| U'(s)\|_{L^2(\R:\R^m)}^2\dd s\Big)^{\frac{1}{2}}\leq\sqrt{ 2\theta(t_0)}e^{\frac{\gamma}{2}(t-t_0)}e^{-\frac{\gamma}{2}j}.
\end{equation}
Therefore,
\begin{equation}\label{sum}
\int_{-\infty}^{t} \| U'(s)\|_{L^2(\R:\R^m)}\dd s\leq \frac{\sqrt{ 2\theta(t_0)}}{1-e^{-\frac{\gamma}{2}}}e^{\frac{\gamma}{2}(t-t_0)}<\infty,
\end{equation}
and $U(t)\to e^-$ in $\mathcal H$, as $t\to-\infty$, for some $e^-\in F^-$. Similarly, we establish the existence of $e^+\in F^+$ such that  $U(t)\to e^+$ in $\mathcal H$, as $t\to\infty$.

Next, we choose $\epsilon\in(0,r/2)$ such that $(\mu+1)\epsilon^2<\sqrt{c}(r-2\epsilon)\epsilon$, where $\mu$ is defined in \eqref{convex3}.
Let $L>0$ be such that $|e^\pm(x)-a^-|<\epsilon/4$ (resp. $|e^\pm(x)-a^+|<\epsilon/4$) holds for every $x\leq -L$ (resp. $x\geq L$).
Our claim is that $|u(t,x)-a^-|\leq r$ (resp. $|u(t,x)-a^+|\leq r$) holds for $x\leq- L-1$ (resp. $x\geq L+1$) and $|t|\geq T$ large enough.
Without loss of generality we are only going to check that $|u(t,x)-a^-|\leq r$ holds for $x\leq- L-1$ and $t\geq T$ large enough. Indeed, otherwise there exists a sequence $(t_k,x_k)$ such that $\lim_{k\to\infty}t_k=\infty$, $x_k\leq -L-1$, and $|u(t_k,x_k)-a^-|> r$. Up to subsequence, we have $\lim_{k\to\infty}u(t_k,x)=e^+(x)$ for a.e. $x\in\R$. Let $T>0$ be such that $|u(t_k,L_0)-a^-|\leq \epsilon/2$ holds for some $L_0 \in(-L-1,-L)$, when $t_k\geq T$.
By the uniform continuity of $u$, there exists $\eta>0$ (independent of k) such that $|u(t,x_k)-a^-|\geq r-\epsilon/2$ and $|u(t,L_0)-a^-|\leq \epsilon$ hold for $t\in[t_k-\eta,t_k+\eta]$. In view of \eqref{bound4} and \eqref{bound5} we deduce that
$\mathcal W(U(t))\geq  \sqrt{c}(r-2\epsilon)\epsilon-(\mu+1)\epsilon^2>0$, $\forall t\in[t_k-\eta,t_k+\eta]$, with $t_k\geq T$. Thus we obtain $\int_{\R} \mathcal W(U(t))\dd t=\infty$ which is a contradiction. This establishes our claim, and now \eqref{lay2bnew} follows easily from a standard comparison argument. Moreover, using elliptic estimates we also obtain that $|\nabla u(t,x)|\leq K' e^{-k'|x|}$ holds for some constants $k',K'>0$, and $|D^2 u|$ is bounded on $\R^2$. As a consequence, the function $\R\ni t\mapsto \psi(t):=\mathcal W(U(t))$ is Lipschitz, since $\psi'(t)=\int_\R[ u_x(t,x)\cdot u_{tx}(t,x)+\nabla W(u(t,x))\cdot u_t(t,x)]\dd x$ is uniformly bounded by a constant $\beta>0$. We infer that
\begin{equation}\label{infer}
\mathcal W(U(t))\leq 2\sqrt{\beta\theta(t_0)}e^{\frac{\gamma}{2}(t-t_0)}, \, \forall t\leq t_0.
\end{equation}
To see this, let $t\leq t_0$ be fixed and let $\lambda:=\psi(t)$.
For $s\in [t-\frac{\lambda}{2\beta},t]$, we have $\psi(s)\geq \psi(t)-\beta|s-t|\geq\frac{\lambda}{2}$. Thus, we get
$\frac{\lambda^2}{4\beta}\leq\int_{t-\frac{\lambda}{2\beta}}^t \psi(s)\dd s\leq \theta(t_0)e^{\gamma(t-t_0)}$, from which \eqref{infer} is straightforward. Finally, \eqref{sum} implies that
\begin{equation}\label{cvv1}
\| U(t)-e^-\|_{L^2(\R:\R^m)}\leq\frac{\sqrt{ 2\theta(t_0)}}{1-e^{-\frac{\gamma}{2}}}e^{\frac{\gamma}{2}(t-t_0)},\, \forall t\leq t_0,
\end{equation}
while according to \eqref{formula} we have
\begin{equation}\label{cvv2}
\| u_x(t,\cdot)-(e^-)'\|_{L^2(\R:\R^m)}^2\leq 2\mathcal W(U(t))+C\| U(t)-e^-\|_{L^2(\R:\R^m)}^2.
\end{equation}
Gathering the previous results, we deduce that $\| U(t)-e^-\|_{H^1(\R:\R^m)}$ converges exponentially to $0$.
\end{proof}

\section{Proof of Theorem \ref{connh3}}
To prove the existence of the minimizer $\tilde U$, just replace in the proof of Theorem \ref{connh2}, $\mathcal H$, $d_{\mathrm{min}}$, $\mathcal J$ and $\mathcal A$, by $\mathcal{\tilde H}$, $\tilde d_{\mathrm{min}}$, $\mathcal{\tilde J}$ and $\mathcal{\tilde A}$.
Next, given a function $\Phi \in C^1_0(\R;H^1(\R;\R^m))$ such that $\supp\Phi \subset [\alpha,\beta]\subset\R$, it is clear that for every $\lambda \in \R$, we have
\begin{equation}\label{var1til}
\mathcal{\tilde J}_\R(\tilde U)\leq\mathcal{\tilde J}_\R(\tilde U+\lambda\Phi),
\end{equation}
and
\begin{equation}\label{var33til}
\frac{\dd}{\dd\lambda}\Big|_{\lambda=0}\int_\R\frac{1}{2}\|\tilde U'(t)+\lambda \Phi'(t)\|_{H^1(\R;\R^m)}^2\dd t=\int_\R\langle\tilde U'(t),\Phi'(t)\rangle_{H^1(\R;\R^m)}\dd t.
\end{equation}
On the other hand, proceeding as in the proof of Theorem \ref{connh2} we obtain
\begin{equation}\label{var22til}
\frac{\dd}{\dd\lambda}\Big|_{\lambda=0}\int_\R\mathcal W(\tilde U(t)+\lambda\Phi(t))\dd t=\int_{\R}\psi(t)\dd t,
\end{equation}
with $\psi(t)=\int_\R\big[\frac{\dd [\tilde U(t)]}{\dd x}\cdot \frac{\dd [\Phi(t)]}{\dd x}+\nabla W([\tilde U(t)](x))\cdot[\Phi(t)](x)\big]\dd x=D\mathcal W(\tilde U(t)) \Phi(t)$ (cf. Lemma \ref{lem1w} (iii)). Indeed, in view of \eqref{cvboundd}, the functions
$\psi_\lambda(t):=\frac{1}{\lambda}[\mathcal W(\tilde U(t)+\lambda \Phi(t))-\mathcal W(\tilde U(t))]$ converge as $\lambda\to 0$ to $\psi(t)$, and are uniformly bounded when $|\lambda|\leq 1$, by the integrable function
\begin{multline*}
\Psi(t)=(\|\tilde U(t)\|_{\mathcal{\tilde H}}+\|\ee'_0\|_{L^2(\R;\R^m)}+\|\Phi(t)\|_{\mathcal{\tilde H}})\|\Phi(t)\|_{\mathcal{\tilde H}}+\kappa_1( \|\tilde U(t)\|_{\mathcal{\tilde H}} + \|\Phi(t)\|_{\mathcal{\tilde H}})\|\Phi(t)\|_{\mathcal{\tilde H}}+2\kappa \kappa_2\chi_{[\alpha,\beta]}(t),
\end{multline*}
where $\kappa=\sup_{t\in[\alpha,\beta]}(\|\tilde U(t)\|_{L^\infty(\R;\R^m)} + \|\Phi(t)\|_{L^\infty(\R;\R^m)})$, $\kappa_1=\sup_{|u|\leq \kappa, |\nu|=1} |D^2W(u)(\nu,\nu)|$, $\kappa_2=\sup_{|u|\leq\kappa}|\nabla W(u)|$, and $\chi$ is the characteristic function. Gathering the previous results we conclude that the minimizer $\tilde U$ satisfies the Euler-Lagrange equation
\begin{equation}\label{euler1til}
\int_{\R}(\langle \tilde U'(t),\Phi'(t)\rangle_{H^1(\R;\R^m)}+D\mathcal W(\tilde U(t)) \Phi(t)   )\dd t=0,
\end{equation}
and thus $\tilde U\in C^2(\R;\mathcal{\tilde H})$ is a classical solution of system $\tilde U''=\nabla \mathcal W(\tilde U)$. Next, we notice that the space $L^2((\alpha,\beta);H^1(\R;\R^m))$ is imbedded in $L^2((\alpha,\beta);L^2(\R;\R^m))$ which is isomorphic to $L^2((\alpha,\beta)\times\R;\R^m)$. Similarly, the space $H^1((\alpha,\beta);H^1(\R;\R^m))$ is imbedded in $H^1((\alpha,\beta);L^2(\R;\R^m))$, thus Lemma \ref{sobolev} also applies to $\tilde U$. That is, setting $\tilde u(t,x):=[\tilde U(t)](x)$, $t\mapsto \tilde U(t)\in \mathcal{\tilde H}$, we have $\tilde u \in H^1_{\mathrm{loc}}(\R^2;\R^m)$, $\tilde u_t\in L^2(\R^2;\R^m)$, and $\tilde u_x\in L^2((\alpha,\beta)\times \R;\R^m)$. Furthermore, we can see that $\tilde u_{tx}\in L^2(\R^2;\R^m)$ by using difference quotients as in the proof of Lemma \ref{sobolev}. In view of the previous results, \eqref{euler1til} and \eqref{var1til} read respectively \eqref{wpde} and \eqref{wmin}, when $\phi(t,x):=[\Phi(t)](x)$ is a $C^2_0(\R^2;\R^m)$ function. To prove \eqref{lay2btil}, we notice that $\tilde u$ is uniformly continuous on the strips $[\alpha,\beta]\times \R$, since $[\alpha,\beta]\ni t\mapsto \tilde U(t)\in \mathcal{\tilde H}$ is Lipschitz continuous, and $|\tilde u(t,x)-\tilde u(t,y)|\leq \lambda|x-y|^{\frac{1}{2}}$ holds for $t\in [\alpha,\beta]$, $x,y\in\R$, and $\lambda=\sup_{[\alpha,\beta]}\|\tilde U(t)\|_{\mathcal{\tilde H}}$. Then, we establish \eqref{lay2btil}, \eqref{lay1btil} and the equipartition property \eqref{equi22} as in the proof of Theorem \ref{connh2}. Finally, when $\mathcal W$ satisfies the nondegeneracy condition \eqref{nondegWtil}, the arguments in the proof of Theorem \ref{connh2} still apply to show \eqref{lay1newtil}, since we have $\sup\{\|e\|_{L^\infty(\R;\R^m)} : e\in F\}<\infty$ as well as $\sup_{t\in\R}\|\tilde U(t)\|_{L^\infty(\R;\R^m)}<\infty$. On the other hand, it is clear in view of \eqref{lay1newtil} that the uniform convergence in \eqref{lay2btil} holds for $t\in\R$.

\section*{Acknowledgments}

The author was partially supported by the National Science Centre, Poland (Grant No. 2017/26/E/ST1/00817)

\bibliographystyle{plain}

\end{document}